\documentclass[draft, 12pt]{amsart}

\textwidth 14cm
\usepackage{amssymb}
\usepackage{amsmath}

\usepackage{color}

%
%
%
 

\makeatletter
 
\def\diagram{\m@th\leftwidth=\z@ \rightwidth=\z@ \topheight=\z@
\botheight=\z@ \setbox\@picbox\hbox\bgroup}
 
\def\enddiagram{\egroup\wd\@picbox\rightwidth\unitlength
\ht\@picbox\topheight\unitlength \dp\@picbox\botheight\unitlength
\hskip\leftwidth\unitlength\box\@picbox}
 
\def\bfig{\begin{diagram}}
\def\efig{\end{diagram}}
\newcount\wideness \newcount\leftwidth \newcount\rightwidth
\newcount\highness \newcount\topheight \newcount\botheight
 
\def\ratchet#1#2{\ifnum#1<#2 \global #1=#2 \fi}
 
\def\putbox(#1,#2)#3{%
\horsize{\wideness}{#3} \divide\wideness by 2
{\advance\wideness by #1 \ratchet{\rightwidth}{\wideness}}
{\advance\wideness by -#1 \ratchet{\leftwidth}{\wideness}}
\vertsize{\highness}{#3} \divide\highness by 2
{\advance\highness by #2 \ratchet{\topheight}{\highness}}
{\advance\highness by -#2 \ratchet{\botheight}{\highness}}
\put(#1,#2){\makebox(0,0){$#3$}}}
 
\def\putlbox(#1,#2)#3{%
\horsize{\wideness}{#3}
{\advance\wideness by #1 \ratchet{\rightwidth}{\wideness}}
{\ratchet{\leftwidth}{-#1}}
\vertsize{\highness}{#3} \divide\highness by 2
{\advance\highness by #2 \ratchet{\topheight}{\highness}}
{\advance\highness by -#2 \ratchet{\botheight}{\highness}}
\put(#1,#2){\makebox(0,0)[l]{$#3$}}}
 
\def\putrbox(#1,#2)#3{%
\horsize{\wideness}{#3}
{\ratchet{\rightwidth}{#1}}
{\advance\wideness by -#1 \ratchet{\leftwidth}{\wideness}}
\vertsize{\highness}{#3} \divide\highness by 2
{\advance\highness by #2 \ratchet{\topheight}{\highness}}
{\advance\highness by -#2 \ratchet{\botheight}{\highness}}
\put(#1,#2){\makebox(0,0)[r]{$#3$}}}

\def\adjust[#1]{} 
 
\newcount \coefa
\newcount \coefb
\newcount \coefc
\newcount\tempcounta
\newcount\tempcountb
\newcount\tempcountc
\newcount\tempcountd
\newcount\xext
\newcount\yext
\newcount\xoff
\newcount\yoff
\newcount\gap%
\newcount\arrowtypea
\newcount\arrowtypeb
\newcount\arrowtypec
\newcount\arrowtyped
\newcount\arrowtypee
\newcount\height
\newcount\width
\newcount\xpos
\newcount\ypos
\newcount\run
\newcount\rise
\newcount\arrowlength
\newcount\halflength
\newcount\arrowtype
\newdimen\tempdimen
\newdimen\xlen
\newdimen\ylen
\newsavebox{\tempboxa}%
\newsavebox{\tempboxb}%
\newsavebox{\tempboxc}%
 
\newdimen\w@dth
 
\def\setw@dth#1#2{\setbox\z@\hbox{\m@th$#1$}\w@dth=\wd\z@
\setbox\@ne\hbox{\m@th$#2$}\ifnum\w@dth<\wd\@ne \w@dth=\wd\@ne \fi
\advance\w@dth by 1.2em}
 
 
\def\t@^#1_#2{\allowbreak\def\n@one{#1}\def\n@two{#2}\mathrel
{\setw@dth{#1}{#2}
\mathop{\hbox to \w@dth{\rightarrowfill}}\limits
\ifx\n@one\empty\else ^{\box\z@}\fi
\ifx\n@two\empty\else _{\box\@ne}\fi}}
\def\t@@^#1{\@ifnextchar_{\t@^{#1}}{\t@^{#1}_{}}}
\def\to{\@ifnextchar^{\t@@}{\t@@^{}}}
 
\def\t@left^#1_#2{\def\n@one{#1}\def\n@two{#2}\mathrel{\setw@dth{#1}{#2}
\mathop{\hbox to \w@dth{\leftarrowfill}}\limits
\ifx\n@one\empty\else ^{\box\z@}\fi
\ifx\n@two\empty\else _{\box\@ne}\fi}}
\def\t@@left^#1{\@ifnextchar_{\t@left^{#1}}{\t@left^{#1}_{}}}
\def\toleft{\@ifnextchar^{\t@@left}{\t@@left^{}}}
 
\def\two@^#1_#2{\allowbreak
\def\n@one{#1}\def\n@two{#2}\mathrel{\setw@dth{#1}{#2}
\mathop{\vcenter{\lineskip\z@\baselineskip\z@
                 \hbox to \w@dth{\rightarrowfill}%
                 \hbox to \w@dth{\rightarrowfill}}%
       }\limits
\ifx\n@one\empty\else ^{\box\z@}\fi
\ifx\n@two\empty\else _{\box\@ne}\fi}}
\def\tw@@^#1{\@ifnextchar _{\two@^{#1}}{\two@^{#1}_{}}}
\def\two{\@ifnextchar ^{\tw@@}{\tw@@^{}}}
 
\def\tofr@^#1_#2{\def\n@one{#1}\def\n@two{#2}\mathrel{\setw@dth{#1}{#2}
\mathop{\vcenter{\hbox to \w@dth{\rightarrowfill}\kern-1.7ex
                 \hbox to \w@dth{\leftarrowfill}}%
       }\limits
\ifx\n@one\empty\else ^{\box\z@}\fi
\ifx\n@two\empty\else _{\box\@ne}\fi}}
\def\t@fr@^#1{\@ifnextchar_ {\tofr@^{#1}}{\tofr@^{#1}_{}}}
\def\tofro{\@ifnextchar^ {\t@fr@}{\t@fr@^{}}}

\def\mon{\mathop{\m@th\hbox to
      14.6\P@{\lasyb\char'51\hskip-2.1\P@$\arrext$\hss
$\mathord\rightarrow$}}\limits} 
\def\leftmono{\mathrel{\m@th\hbox to
14.6\P@{$\mathord\leftarrow$\hss$\arrext$\hskip-2.1\P@\lasyb\char'50%
}}\limits} 
\mathchardef\arrext="0200       

\setlength{\unitlength}{.01em}%
\def\settypes(#1,#2,#3){\arrowtypea#1 \arrowtypeb#2 \arrowtypec#3}
\def\settoheight#1#2{\setbox\@tempboxa\hbox{#2}#1\ht\@tempboxa\relax}%
\def\settodepth#1#2{\setbox\@tempboxa\hbox{#2}#1\dp\@tempboxa\relax}%
\def\settokens`#1`#2`#3`#4`{%
     \def\tokena{#1}\def\tokenb{#2}\def\tokenc{#3}\def\tokend{#4}}
\def\setsqparms[#1`#2`#3`#4;#5`#6]{%
\arrowtypea #1
\arrowtypeb #2
\arrowtypec #3
\arrowtyped #4
\width #5
\height #6
}
\def\setpos(#1,#2){\xpos=#1 \ypos#2}

\def\settriparms[#1`#2`#3;#4]{\settripairparms[#1`#2`#3`1`1;#4]}%
 
\def\settripairparms[#1`#2`#3`#4`#5;#6]{%
\arrowtypea #1
\arrowtypeb #2
\arrowtypec #3
\arrowtyped #4
\arrowtypee #5
\width #6
\height #6
}
 
\def\resetparms{\settripairparms[1`1`1`1`1;500]\width 500}
 
\resetparms
 
\def\mvector(#1,#2)#3{
\put(0,0){\vector(#1,#2){#3}}%
\put(0,0){\vector(#1,#2){26}}%
}
\def\evector(#1,#2)#3{{
\arrowlength #3
\put(0,0){\vector(#1,#2){\arrowlength}}%
\advance \arrowlength by-30
\put(0,0){\vector(#1,#2){\arrowlength}}%
}}
 
\def\horsize#1#2{%
\settowidth{\tempdimen}{$#2$}%
#1=\tempdimen
\divide #1 by\unitlength
}
 
\def\vertsize#1#2{%
\settoheight{\tempdimen}{$#2$}%
#1=\tempdimen
\settodepth{\tempdimen}{$#2$}%
\advance #1 by\tempdimen
\divide #1 by\unitlength
}
 
\def\putvector(#1,#2)(#3,#4)#5#6{{%
\ifnum3<\arrowtype
\putdashvector(#1,#2)(#3,#4)#5\arrowtype
\else
\ifnum\arrowtype<-3
\putdashvector(#1,#2)(#3,#4)#5\arrowtype
\else
\xpos=#1
\ypos=#2
\run=#3
\rise=#4
\arrowlength=#5
\ifnum \arrowtype<0
    \ifnum \run=0
        \advance \ypos by-\arrowlength
    \else
        \tempcounta \arrowlength
        \multiply \tempcounta by\rise
        \divide \tempcounta by\run
        \ifnum\run>0
            \advance \xpos by\arrowlength
            \advance \ypos by\tempcounta
        \else
            \advance \xpos by-\arrowlength
            \advance \ypos by-\tempcounta
        \fi
    \fi
    \multiply \arrowtype by-1
    \multiply \rise by-1
    \multiply \run by-1
\fi
\ifcase \arrowtype
\or \put(\xpos,\ypos){\vector(\run,\rise){\arrowlength}}%
\or \put(\xpos,\ypos){\mvector(\run,\rise)\arrowlength}%
\or \put(\xpos,\ypos){\evector(\run,\rise){\arrowlength}}%
\fi\fi\fi
}}
 
\def\putsplitvector(#1,#2)#3#4{
\xpos #1
\ypos #2
\arrowtype #4
\halflength #3
\arrowlength #3
\gap 140
\advance \halflength by-\gap
\divide \halflength by2
\ifnum\arrowtype>0
   \ifcase \arrowtype
   \or \put(\xpos,\ypos){\line(0,-1){\halflength}}%
       \advance\ypos by-\halflength
       \advance\ypos by-\gap
       \put(\xpos,\ypos){\vector(0,-1){\halflength}}%
   \or \put(\xpos,\ypos){\line(0,-1)\halflength}%
       \put(\xpos,\ypos){\vector(0,-1)3}%
       \advance\ypos by-\halflength
       \advance\ypos by-\gap
       \put(\xpos,\ypos){\vector(0,-1){\halflength}}%
   \or \put(\xpos,\ypos){\line(0,-1)\halflength}%
       \advance\ypos by-\halflength
       \advance\ypos by-\gap
       \put(\xpos,\ypos){\evector(0,-1){\halflength}}%
   \fi
\else \arrowtype=-\arrowtype
   \ifcase\arrowtype
   \or \advance \ypos by-\arrowlength
       \put(\xpos,\ypos){\line(0,1){\halflength}}%
       \advance\ypos by\halflength
       \advance\ypos by\gap
       \put(\xpos,\ypos){\vector(0,1){\halflength}}%
   \or \advance \ypos by-\arrowlength
       \put(\xpos,\ypos){\line(0,1)\halflength}%
       \put(\xpos,\ypos){\vector(0,1)3}%
       \advance\ypos by\halflength
       \advance\ypos by\gap
       \put(\xpos,\ypos){\vector(0,1){\halflength}}%
   \or \advance \ypos by-\arrowlength
       \put(\xpos,\ypos){\line(0,1)\halflength}%
       \advance\ypos by\halflength
       \advance\ypos by\gap
       \put(\xpos,\ypos){\evector(0,1){\halflength}}%
   \fi
\fi
}
 
\def\putmorphism(#1)(#2,#3)[#4`#5`#6]#7#8#9{{%
\run #2
\rise #3
\ifnum\rise=0
  \puthmorphism(#1)[#4`#5`#6]{#7}{#8}#9%
\else\ifnum\run=0
  \putvmorphism(#1)[#4`#5`#6]{#7}{#8}#9%
\else
\setpos(#1)%
\arrowlength #7
\arrowtype #8
\ifnum\run=0
\else\ifnum\rise=0
\else
\ifnum\run>0
    \coefa=1
\else
   \coefa=-1
\fi
\ifnum\arrowtype>0
   \coefb=0
   \coefc=-1
\else
   \coefb=\coefa
   \coefc=1
   \arrowtype=-\arrowtype
\fi
\width=2
\multiply \width by\run
\divide \width by\rise
\ifnum \width<0  \width=-\width\fi
\advance\width by60
\if l#9 \width=-\width\fi
\putbox(\xpos,\ypos){#4}
{\multiply \coefa by\arrowlength
\advance\xpos by\coefa
\multiply \coefa by\rise
\divide \coefa by\run
\advance \ypos by\coefa
\putbox(\xpos,\ypos){#5} }%
{\multiply \coefa by\arrowlength
\divide \coefa by2
\advance \xpos by\coefa
\advance \xpos by\width
\multiply \coefa by\rise
\divide \coefa by\run
\advance \ypos by\coefa
\if l#9%
   \putrbox(\xpos,\ypos){#6}%
\else\if r#9%
   \putlbox(\xpos,\ypos){#6}%
\fi\fi }%
{\multiply \rise by-\coefc
\multiply \run by-\coefc
\multiply \coefb by\arrowlength
\advance \xpos by\coefb
\multiply \coefb by\rise
\divide \coefb by\run
\advance \ypos by\coefb
\multiply \coefc by70
\advance \ypos by\coefc
\multiply \coefc by\run
\divide \coefc by\rise
\advance \xpos by\coefc
\multiply \coefa by140
\multiply \coefa by\run
\divide \coefa by\rise
\advance \arrowlength by\coefa
\ifcase\arrowtype
\or \put(\xpos,\ypos){\vector(\run,\rise){\arrowlength}}%
\or \put(\xpos,\ypos){\mvector(\run,\rise){\arrowlength}}%
\or \put(\xpos,\ypos){\evector(\run,\rise){\arrowlength}}%
\fi}\fi\fi\fi\fi}}

\newcount\numbdashes \newcount\lengthdash \newcount\increment
 
\def\howmanydashes{
\numbdashes=\arrowlength \lengthdash=40
\divide\numbdashes by \lengthdash
\lengthdash=\arrowlength
\divide\lengthdash by \numbdashes
\increment=\lengthdash
\multiply\lengthdash by 3
\divide\lengthdash by 5
}
 
\def\putdashvector(#1)(#2,#3)#4#5{%
\ifnum#3=0 \putdashhvector(#1){#4}#5
\else
\ifnum#2=0
\putdashvvector(#1){#4}#5\fi\fi}
 
\def\putdashhvector(#1,#2)#3#4{{%
\arrowlength=#3 \howmanydashes
\multiput(#1,#2)(\increment,0){\numbdashes}%
{\vrule height .4pt width \lengthdash\unitlength}
\arrowtype=#4 \xpos=#1
\ifnum\arrowtype<0 \advance\arrowtype by 7 \fi
\ifcase\arrowtype
\or \advance\xpos by 10
    \put(\xpos,#2){\vector(-1,0){\lengthdash}}
    \advance\xpos by 40
    \put(\xpos,#2){\vector(-1,0){\lengthdash}}
\or \advance \xpos by 10
    \put(\xpos,#2){\vector(-1,0){\lengthdash}}
    \advance\xpos by  \arrowlength
    \advance\xpos by  -50
    \put(\xpos,#2){\vector(-1,0){\lengthdash}}
\or \advance\xpos by 10
    \put(\xpos,#2){\vector(-1,0){\lengthdash}}
\or \advance\xpos by \arrowlength
    \advance\xpos by -\lengthdash
    \put(\xpos,#2){\vector(1,0){\lengthdash}}
\or {\advance\xpos by 10
    \put(\xpos,#2){\vector(1,0){\lengthdash}}}
    \advance\xpos by \arrowlength
    \advance\xpos by -\lengthdash
    \put(\xpos,#2){\vector(1,0){\lengthdash}}
\or \advance\xpos by \arrowlength
    \advance\xpos by -\lengthdash
    \put(\xpos,#2){\vector(1,0){\lengthdash}}
    \advance\xpos by -40
    \put(\xpos,#2){\vector(1,0){\lengthdash}}
   \fi
}}
 
\def\putdashvvector(#1,#2)#3#4{{%
\arrowlength=#3 \howmanydashes
\ypos=#2 \advance\ypos by -\arrowlength
\multiput(#1,#2)(0,\increment){\numbdashes}%
    {\vrule width .4pt height \lengthdash\unitlength}
\arrowtype=#4 \ypos=#2
\ifnum\arrowtype<0 \advance\arrowtype by 7 \fi
\ifcase\arrowtype
\or \advance\ypos by \arrowlength \advance\ypos by -40
    \put(#1,\ypos){\vector(0,1){\lengthdash}}
    \advance\ypos by -40
    \put(#1,\ypos){\vector(0,1){\lengthdash}}
\or \advance\ypos by 10
    \put(#1,\ypos){\vector(0,1){\lengthdash}}
    \advance\ypos by \arrowlength \advance\ypos by -40
    \put(#1,\ypos){\vector(0,1){\lengthdash}}
\or \advance\ypos by \arrowlength \advance\ypos by -40
    \put(#1,\ypos){\vector(0,1){\lengthdash}}
\or \advance\ypos by 10
    \put(#1,\ypos){\vector(0,-1){\lengthdash}}
\or \advance\ypos by 10
    \put(#1,\ypos){\vector(0,-1){\lengthdash}}
    \advance\ypos by \arrowlength \advance\ypos by -40
    \put(#1,\ypos){\vector(0,-1){\lengthdash}}
\or \advance\ypos by 10
    \put(#1,\ypos){\vector(0,-1){\lengthdash}}
    \advance\ypos by 40
    \put(#1,\ypos){\vector(0,-1){\lengthdash}}
\fi
}}
 
\def\puthmorphism(#1,#2)[#3`#4`#5]#6#7#8{{%
\xpos #1
\ypos #2
\width #6
\arrowlength #6
\arrowtype=#7
\putbox(\xpos,\ypos){#3\vphantom{#4}}%
{\advance \xpos by\arrowlength
\putbox(\xpos,\ypos){\vphantom{#3}#4}}%
\horsize{\tempcounta}{#3}%
\horsize{\tempcountb}{#4}%
\divide \tempcounta by2
\divide \tempcountb by2
\advance \tempcounta by30
\advance \tempcountb by30
\advance \xpos by\tempcounta
\advance \arrowlength by-\tempcounta
\advance \arrowlength by-\tempcountb
\putvector(\xpos,\ypos)(1,0)\arrowlength\arrowtype
\divide \arrowlength by2
\advance \xpos by\arrowlength
\vertsize{\tempcounta}{#5}%
\divide\tempcounta by2
\advance \tempcounta by20
\if a#8 %
   \advance \ypos by\tempcounta
   \putbox(\xpos,\ypos){#5}%
\else
   \advance \ypos by-\tempcounta
   \putbox(\xpos,\ypos){#5}%
\fi}}
 
\def\putvmorphism(#1,#2)[#3`#4`#5]#6#7#8{{%
\xpos #1
\ypos #2
\arrowlength #6
\arrowtype #7
\settowidth{\xlen}{$#5$}%
\putbox(\xpos,\ypos){#3}%
{\advance \ypos by-\arrowlength
\putbox(\xpos,\ypos){#4}}%
{\advance\arrowlength by-140
\advance \ypos by-70
\ifdim\xlen>0pt
   \if m#8%
      \putsplitvector(\xpos,\ypos)\arrowlength\arrowtype
   \else
   \putvector(\xpos,\ypos)(0,-1)\arrowlength\arrowtype
   \fi
\else
   \putvector(\xpos,\ypos)(0,-1)\arrowlength\arrowtype
\fi}%
\ifdim\xlen>0pt
   \divide \arrowlength by2
   \advance\ypos by-\arrowlength
   \if l#8%
      \advance \xpos by-40
      \putrbox(\xpos,\ypos){#5}%
   \else\if r#8%
      \advance \xpos by40
      \putlbox(\xpos,\ypos){#5}%
   \else
      \putbox(\xpos,\ypos){#5}%
   \fi\fi
\fi
}}
 
\def\putsquarep<#1>(#2)[#3;#4`#5`#6`#7]{{%
\setsqparms[#1]%
\setpos(#2)%
\settokens`#3`%
\puthmorphism(\xpos,\ypos)[\tokenc`\tokend`{#7}]{\width}{\arrowtyped}b%
\advance\ypos by \height
\puthmorphism(\xpos,\ypos)[\tokena`\tokenb`{#4}]{\width}{\arrowtypea}a%
\putvmorphism(\xpos,\ypos)[``{#5}]{\height}{\arrowtypeb}l%
\advance\xpos by \width
\putvmorphism(\xpos,\ypos)[``{#6}]{\height}{\arrowtypec}r%
}}
 
\def\putsquare{\@ifnextchar <{\putsquarep}{\putsquarep%
   <\arrowtypea`\arrowtypeb`\arrowtypec`\arrowtyped;\width`\height>}}
\def\square{\@ifnextchar< {\squarep}{\squarep
   <\arrowtypea`\arrowtypeb`\arrowtypec`\arrowtyped;\width`\height>}}
\def\squarep<#1>[#2`#3`#4`#5;#6`#7`#8`#9]{{
\setsqparms[#1]
\diagram
\putsquarep<\arrowtypea`\arrowtypeb`\arrowtypec`
\arrowtyped;\width`\height>
(0,0)[#2`#3`#4`{#5};#6`#7`#8`{#9}]
\enddiagram
}}                                                 
\def\putptrianglep<#1>(#2,#3)[#4`#5`#6;#7`#8`#9]{{%
\settriparms[#1]%
\xpos=#2 \ypos=#3
\advance\ypos by \height
\puthmorphism(\xpos,\ypos)[#4`#5`{#7}]{\height}{\arrowtypea}a%
\putvmorphism(\xpos,\ypos)[`#6`{#8}]{\height}{\arrowtypeb}l%
\advance\xpos by\height
\putmorphism(\xpos,\ypos)(-1,-1)[``{#9}]{\height}{\arrowtypec}r%
}}
 
\def\putptriangle{\@ifnextchar <{\putptrianglep}{\putptrianglep
   <\arrowtypea`\arrowtypeb`\arrowtypec;\height>}}
\def\ptriangle{\@ifnextchar <{\ptrianglep}{\ptrianglep
   <\arrowtypea`\arrowtypeb`\arrowtypec;\height>}}
\def\ptrianglep<#1>[#2`#3`#4;#5`#6`#7]{{
\settriparms[#1]
\diagram
\putptrianglep<\arrowtypea`\arrowtypeb`
\arrowtypec;\height>
(0,0)[#2`#3`#4;#5`#6`{#7}]
\enddiagram
}}                                            
 
\def\putqtrianglep<#1>(#2,#3)[#4`#5`#6;#7`#8`#9]{{%
\settriparms[#1]%
\xpos=#2 \ypos=#3
\advance\ypos by\height
\puthmorphism(\xpos,\ypos)[#4`#5`{#7}]{\height}{\arrowtypea}a%
\putmorphism(\xpos,\ypos)(1,-1)[``{#8}]{\height}{\arrowtypeb}l%
\advance\xpos by\height
\putvmorphism(\xpos,\ypos)[`#6`{#9}]{\height}{\arrowtypec}r%
}}
 
\def\putqtriangle{\@ifnextchar <{\putqtrianglep}{\putqtrianglep
   <\arrowtypea`\arrowtypeb`\arrowtypec;\height>}}
\def\qtriangle{\@ifnextchar <{\qtrianglep}{\qtrianglep
   <\arrowtypea`\arrowtypeb`\arrowtypec;\height>}}
\def\qtrianglep<#1>[#2`#3`#4;#5`#6`#7]{{
\settriparms[#1]
\width=\height                                
\diagram
\putqtrianglep<\arrowtypea`\arrowtypeb`
\arrowtypec;\height>
(0,0)[#2`#3`#4;#5`#6`{#7}]
\enddiagram
}}
 
\def\putdtrianglep<#1>(#2,#3)[#4`#5`#6;#7`#8`#9]{{%
\settriparms[#1]%
\xpos=#2 \ypos=#3
\puthmorphism(\xpos,\ypos)[#5`#6`{#9}]{\height}{\arrowtypec}b%
\advance\xpos by \height \advance\ypos by\height
\putmorphism(\xpos,\ypos)(-1,-1)[``{#7}]{\height}{\arrowtypea}l%
\putvmorphism(\xpos,\ypos)[#4``{#8}]{\height}{\arrowtypeb}r%
}}
 
\def\putdtriangle{\@ifnextchar <{\putdtrianglep}{\putdtrianglep
   <\arrowtypea`\arrowtypeb`\arrowtypec;\height>}}
\def\dtriangle{\@ifnextchar <{\dtrianglep}{\dtrianglep
   <\arrowtypea`\arrowtypeb`\arrowtypec;\height>}}
\def\dtrianglep<#1>[#2`#3`#4;#5`#6`#7]{{
\settriparms[#1]
\width=\height                                
\diagram
\putdtrianglep<\arrowtypea`\arrowtypeb`
\arrowtypec;\height>
(0,0)[#2`#3`#4;#5`#6`{#7}]
\enddiagram
}}
 
\def\putbtrianglep<#1>(#2,#3)[#4`#5`#6;#7`#8`#9]{{%
\settriparms[#1]%
\xpos=#2 \ypos=#3
\puthmorphism(\xpos,\ypos)[#5`#6`{#9}]{\height}{\arrowtypec}b%
\advance\ypos by\height
\putmorphism(\xpos,\ypos)(1,-1)[``{#8}]{\height}{\arrowtypeb}r%
\putvmorphism(\xpos,\ypos)[#4``{#7}]{\height}{\arrowtypea}l%
}}
 
\def\putbtriangle{\@ifnextchar <{\putbtrianglep}{\putbtrianglep
   <\arrowtypea`\arrowtypeb`\arrowtypec;\height>}}
\def\btriangle{\@ifnextchar <{\btrianglep}{\btrianglep
   <\arrowtypea`\arrowtypeb`\arrowtypec;\height>}}
\def\btrianglep<#1>[#2`#3`#4;#5`#6`#7]{{
\settriparms[#1]
\width=\height                               
\diagram
\putbtrianglep<\arrowtypea`\arrowtypeb`
\arrowtypec;\height>
(0,0)[#2`#3`#4;#5`#6`{#7}]
\enddiagram
}}
 
\def\putAtrianglep<#1>(#2,#3)[#4`#5`#6;#7`#8`#9]{{%
\settriparms[#1]%
\xpos=#2 \ypos=#3
{\multiply \height by2
\puthmorphism(\xpos,\ypos)[#5`#6`{#9}]{\height}{\arrowtypec}b}%
\advance\xpos by\height \advance\ypos by\height
\putmorphism(\xpos,\ypos)(-1,-1)[#4``{#7}]{\height}{\arrowtypea}l%
\putmorphism(\xpos,\ypos)(1,-1)[``{#8}]{\height}{\arrowtypeb}r%
}}
 
\def\putAtriangle{\@ifnextchar <{\putAtrianglep}{\putAtrianglep
   <\arrowtypea`\arrowtypeb`\arrowtypec;\height>}}
\def\Atriangle{\@ifnextchar <{\Atrianglep}{\Atrianglep
   <\arrowtypea`\arrowtypeb`\arrowtypec;\height>}}
\def\Atrianglep<#1>[#2`#3`#4;#5`#6`#7]{{
\settriparms[#1]
\width=\height                                     
\diagram
\putAtrianglep<\arrowtypea`\arrowtypeb`
\arrowtypec;\height>
(0,0)[#2`#3`#4;#5`#6`{#7}]
\enddiagram
}}
 
\def\putAtrianglepairp<#1>(#2)[#3;#4`#5`#6`#7`#8]{{%
\settripairparms[#1]%
\setpos(#2)%
\settokens`#3`%
\puthmorphism(\xpos,\ypos)[\tokenb`\tokenc`{#7}]{\height}{\arrowtyped}b%
\advance\xpos by\height
\puthmorphism(\xpos,\ypos)[\phantom{\tokenc}`\tokend`{#8}]%
{\height}{\arrowtypee}b%
\advance\ypos by\height
\putmorphism(\xpos,\ypos)(-1,-1)[\tokena``{#4}]{\height}{\arrowtypea}l%
\putvmorphism(\xpos,\ypos)[``{#5}]{\height}{\arrowtypeb}m%
\putmorphism(\xpos,\ypos)(1,-1)[``{#6}]{\height}{\arrowtypec}r%
}}
 
\def\putAtrianglepair{\@ifnextchar <{\putAtrianglepairp}{\putAtrianglepairp%
   <\arrowtypea`\arrowtypeb`\arrowtypec`\arrowtyped`\arrowtypee;\height>}}
\def\Atrianglepair{\@ifnextchar <{\Atrianglepairp}{\Atrianglepairp%
   <\arrowtypea`\arrowtypeb`\arrowtypec`\arrowtyped`\arrowtypee;\height>}}
 
\def\Atrianglepairp<#1>[#2;#3`#4`#5`#6`#7]{{
\settripairparms[#1]
\settokens`#2`
\width=\height                                
\diagram
\putAtrianglepairp                            
<\arrowtypea`\arrowtypeb`\arrowtypec`
\arrowtyped`\arrowtypee;\height>
(0,0)[{#2};#3`#4`#5`#6`{#7}]
\enddiagram
}}
 
\def\putVtrianglep<#1>(#2,#3)[#4`#5`#6;#7`#8`#9]{{%
\settriparms[#1]%
\xpos=#2 \ypos=#3
\advance\ypos by\height
{\multiply\height by2
\puthmorphism(\xpos,\ypos)[#4`#5`{#7}]{\height}{\arrowtypea}a}%
\putmorphism(\xpos,\ypos)(1,-1)[`#6`{#8}]{\height}{\arrowtypeb}l%
\advance\xpos by\height
\advance\xpos by\height
\putmorphism(\xpos,\ypos)(-1,-1)[``{#9}]{\height}{\arrowtypec}r%
}}
 
\def\putVtriangle{\@ifnextchar <{\putVtrianglep}{\putVtrianglep
   <\arrowtypea`\arrowtypeb`\arrowtypec;\height>}}
\def\Vtriangle{\@ifnextchar <{\Vtrianglep}{\Vtrianglep
   <\arrowtypea`\arrowtypeb`\arrowtypec;\height>}}
\def\Vtrianglep<#1>[#2`#3`#4;#5`#6`#7]{{
\settriparms[#1]
\width=\height                                 
\diagram
\putVtrianglep<\arrowtypea`\arrowtypeb`
\arrowtypec;\height>
(0,0)[#2`#3`#4;#5`#6`{#7}]
\enddiagram
}}
 
\def\putVtrianglepairp<#1>(#2)[#3;#4`#5`#6`#7`#8]{{
\settripairparms[#1]%
\setpos(#2)%
\settokens`#3`%
\advance\ypos by\height
\putmorphism(\xpos,\ypos)(1,-1)[`\tokend`{#6}]{\height}{\arrowtypec}l%
\puthmorphism(\xpos,\ypos)[\tokena`\tokenb`{#4}]{\height}{\arrowtypea}a%
\advance\xpos by\height
\puthmorphism(\xpos,\ypos)[\phantom{\tokenb}`\tokenc`{#5}]%
{\height}{\arrowtypeb}a%
\putvmorphism(\xpos,\ypos)[``{#7}]{\height}{\arrowtyped}m%
\advance\xpos by\height
\putmorphism(\xpos,\ypos)(-1,-1)[``{#8}]{\height}{\arrowtypee}r%
}}
 
\def\putVtrianglepair{\@ifnextchar <{\putVtrianglepairp}{\putVtrianglepairp%
    <\arrowtypea`\arrowtypeb`\arrowtypec`\arrowtyped`\arrowtypee;\height>}}
\def\Vtrianglepair{\@ifnextchar <{\Vtrianglepairp}{\Vtrianglepairp%
    <\arrowtypea`\arrowtypeb`\arrowtypec`\arrowtyped`\arrowtypee;\height>}}
\def\Vtrianglepairp<#1>[#2;#3`#4`#5`#6`#7]{{
\settripairparms[#1]
\settokens`#2`
\diagram
\putVtrianglepairp                             
<\arrowtypea`\arrowtypeb`\arrowtypec`
\arrowtyped`\arrowtypee;\height>
(0,0)[{#2};#3`#4`#5`#6`{#7}]
\enddiagram
}}

\def\putCtrianglep<#1>(#2,#3)[#4`#5`#6;#7`#8`#9]{{%
\settriparms[#1]%
\xpos=#2 \ypos=#3
\advance\ypos by\height
\putmorphism(\xpos,\ypos)(1,-1)[``{#9}]{\height}{\arrowtypec}l%
\advance\xpos by\height
\advance\ypos by\height
\putmorphism(\xpos,\ypos)(-1,-1)[#4`#5`{#7}]{\height}{\arrowtypea}l%
{\multiply\height by 2
\putvmorphism(\xpos,\ypos)[`#6`{#8}]{\height}{\arrowtypeb}r}%
}}
 
\def\putCtriangle{\@ifnextchar <{\putCtrianglep}{\putCtrianglep
    <\arrowtypea`\arrowtypeb`\arrowtypec;\height>}}
\def\Ctriangle{\@ifnextchar <{\Ctrianglep}{\Ctrianglep
    <\arrowtypea`\arrowtypeb`\arrowtypec;\height>}}
\def\Ctrianglep<#1>[#2`#3`#4;#5`#6`#7]{{
\settriparms[#1]
\width=\height                               
\diagram
\putCtrianglep<\arrowtypea`\arrowtypeb`
\arrowtypec;\height>
(0,0)[#2`#3`#4;#5`#6`{#7}]
\enddiagram
}}                                           
\def\putDtrianglep<#1>(#2,#3)[#4`#5`#6;#7`#8`#9]{{%
\settriparms[#1]%
\xpos=#2 \ypos=#3
\advance\xpos by\height \advance\ypos by\height
\putmorphism(\xpos,\ypos)(-1,-1)[``{#9}]{\height}{\arrowtypec}r%
\advance\xpos by-\height \advance\ypos by\height
\putmorphism(\xpos,\ypos)(1,-1)[`#5`{#8}]{\height}{\arrowtypeb}r%
{\multiply\height by 2
\putvmorphism(\xpos,\ypos)[#4`#6`{#7}]{\height}{\arrowtypea}l}%
}}
 
\def\putDtriangle{\@ifnextchar <{\putDtrianglep}{\putDtrianglep
    <\arrowtypea`\arrowtypeb`\arrowtypec;\height>}}
\def\Dtriangle{\@ifnextchar <{\Dtrianglep}{\Dtrianglep
   <\arrowtypea`\arrowtypeb`\arrowtypec;\height>}}
\def\Dtrianglep<#1>[#2`#3`#4;#5`#6`#7]{{
\settriparms[#1]
\width=\height                              
\diagram
\putDtrianglep<\arrowtypea`\arrowtypeb`
\arrowtypec;\height>
(0,0)[#2`#3`#4;#5`#6`{#7}]
\enddiagram
}}                                          
\def\setrecparms[#1`#2]{\width=#1 \height=#2}%
 
\def\recursep<#1`#2>[#3;#4`#5`#6`#7`#8]{{\m@th
\width=#1 \height=#2
\settokens`#3`
\settowidth{\tempdimen}{$\tokena$}
\ifdim\tempdimen=0pt
  \savebox{\tempboxa}{\hbox{$\tokenb$}}%
  \savebox{\tempboxb}{\hbox{$\tokend$}}%
  \savebox{\tempboxc}{\hbox{$#6$}}%
\else
  \savebox{\tempboxa}{\hbox{$\hbox{$\tokena$}\times\hbox{$\tokenb$}$}}%
  \savebox{\tempboxb}{\hbox{$\hbox{$\tokena$}\times\hbox{$\tokend$}$}}%
  \savebox{\tempboxc}{\hbox{$\hbox{$\tokena$}\times\hbox{$#6$}$}}%
\fi
\ypos=\height
\divide\ypos by 2
\xpos=\ypos
\advance\xpos by \width
\bfig
\putCtrianglep<-1`1`1;\ypos>(0,0)[`\tokenc`;#5`#6`{#7}]%
\puthmorphism(\ypos,0)[\tokend`\usebox{\tempboxb}`{#8}]{\width}{-1}b%
\puthmorphism(\ypos,\height)[\tokenb`\usebox{\tempboxa}`{#4}]{\width}{-1}a%
\advance\ypos by \width
\putvmorphism(\ypos,\height)[``\usebox{\tempboxc}]{\height}1r%
\efig
}}
 
\def\recurse{\@ifnextchar <{\recursep}{\recursep<\width`\height>}}
 
\def\puttwohmorphisms(#1,#2)[#3`#4;#5`#6]#7#8#9{{%
%
\puthmorphism(#1,#2)[#3`#4`]{#7}0a
\ypos=#2
\advance\ypos by 20
\puthmorphism(#1,\ypos)[\phantom{#3}`\phantom{#4}`#5]{#7}{#8}a
\advance\ypos by -40
\puthmorphism(#1,\ypos)[\phantom{#3}`\phantom{#4}`#6]{#7}{#9}b
}}
 
\def\puttwovmorphisms(#1,#2)[#3`#4;#5`#6]#7#8#9{{%
%
%
\putvmorphism(#1,#2)[#3`#4`]{#7}0a
\xpos=#1
\advance\xpos by -20
\putvmorphism(\xpos,#2)[\phantom{#3}`\phantom{#4}`#5]{#7}{#8}l
\advance\xpos by 40
\putvmorphism(\xpos,#2)[\phantom{#3}`\phantom{#4}`#6]{#7}{#9}r
}}
 
\def\puthcoequalizer(#1)[#2`#3`#4;#5`#6`#7]#8#9{{%
%
\setpos(#1)%
\puttwohmorphisms(\xpos,\ypos)[#2`#3;#5`#6]{#8}11%
\advance\xpos by #8
\puthmorphism(\xpos,\ypos)[\phantom{#3}`#4`#7]{#8}1{#9}
}}
 
\def\putvcoequalizer(#1)[#2`#3`#4;#5`#6`#7]#8#9{{%
%
%
\setpos(#1)%
\puttwovmorphisms(\xpos,\ypos)[#2`#3;#5`#6]{#8}11%
\advance\ypos by -#8
\putvmorphism(\xpos,\ypos)[\phantom{#3}`#4`#7]{#8}1{#9}
}}
 
\def\putthreehmorphisms(#1)[#2`#3;#4`#5`#6]#7(#8)#9{{%
\setpos(#1) \settypes(#8)
\if a#9 %
     \vertsize{\tempcounta}{#5}%
     \vertsize{\tempcountb}{#6}%
     \ifnum \tempcounta<\tempcountb \tempcounta=\tempcountb \fi
\else
     \vertsize{\tempcounta}{#4}%
     \vertsize{\tempcountb}{#5}%
     \ifnum \tempcounta<\tempcountb \tempcounta=\tempcountb \fi
\fi
\advance \tempcounta by 60
\puthmorphism(\xpos,\ypos)[#2`#3`#5]{#7}{\arrowtypeb}{#9}
\advance\ypos by \tempcounta
\puthmorphism(\xpos,\ypos)[\phantom{#2}`\phantom{#3}`#4]{#7}{\arrowtypea}{#9}
\advance\ypos by -\tempcounta \advance\ypos by -\tempcounta
\puthmorphism(\xpos,\ypos)[\phantom{#2}`\phantom{#3}`#6]{#7}{\arrowtypec}{#9}
}}
 
\def\setarrowtoks[#1`#2`#3`#4`#5`#6]{%
\def\toka{#1}
\def\tokb{#2}
\def\tokc{#3}
\def\tokd{#4}
\def\toke{#5}
\def\tokf{#6}
}
\def\hex{\@ifnextchar <{\hexp}{\hexp<1000`400>}}
\def\hexp<#1`#2>[#3`#4`#5`#6`#7`#8;#9]{%
\setarrowtoks[#9]
\yext=#2 \advance \yext by #2
\xext=#1 \advance\xext by \yext
\bfig
\putCtriangle<-1`0`1;#2>(0,0)[`#5`;\tokb``\tokd]
\xext=#1 \yext=#2 \advance \yext by #2
\putsquare<1`0`0`1;\xext`\yext>(#2,0)[#3`#4`#7`#8;\toka```\tokf]
\advance \xext by #2
\putDtriangle<0`1`-1;#2>(\xext,0)[`#6`;`\tokc`\toke]
\efig
}
\makeatother

\newtheorem{thm}{Theorem}

\newtheorem{prop}[thm]{Proposition}
\newtheorem{cor}[thm]{Corollary}
\newtheorem{lem}[thm]{Lemma}

\newenvironment{rema}[1]{\noindent {\em Remark.} #1}{}
\newenvironment{exa}[1]{\noindent {\em Example.} #1}{}

\newcommand{\ham}{\textup{Ham}}

\usepackage{ifdraft, blindtext, verbatim}
\ifdraft{%
    \usepackage{todonotes}}{ 
    \usepackage[disable]{todonotes} 
}


\begin{document}
\title{Hamiltonian loops on the symplectic one-point blow up}
\author{Andr\'es Pedroza}
\address{Facultad de Ciencias\\
           Universidad de Colima\\
           Bernal D\'{\i}az del Castillo No. 340\\
           Colima, Col., Mexico 28045}
\email{andres\_pedroza@ucol.mx}

\begin{abstract}
We lift a  loop of Hamiltonian diffeomorphisms on a symplectic manifold to loop of  Hamiltonian diffeomorphisms
on the symplectic one-point blow up of the symplectic manifold. Then we use Weinstein's morphism to 
show that the  lifted  loop of Hamiltonian  diffeomorphisms  has infinite order on the
fundamental group of the group of Hamiltonian diffeomorphisms of the blown
up manifold.

\end{abstract}

\keywords{Symplectic one-pint blow up,  Hamiltonian diffeomorphism group,  Weinstein's morphism.}
\thanks{The author was supported by a CONACYT grant CB 2010/151846.}

\maketitle


\section{Introduction}
\label{s:intro}

The rational homotopy type of the group of Hamiltonian diffeomorphisms of 
 the symplectic one-point blow up $(\widetilde M,\widetilde\omega_\rho)$ of
weight $\rho$ is known for only a special class of symplectic manifolds. 
In \cite{abreu-mcduff-topology},
M. Abreu and D. McDuff computed the rational homotopy type of the group of symplectic diffeomorphisms
of the symplectic one-point blow up of $(\mathbb{C}P^2,\omega_\textup{FS})$
 In \cite{Lalonde-Pin},
F. Lalonde and M. Pinsonnault  computed the rational homotopy type of the above
group for the one-point blow up  of $(S^2\times S^2, \omega\oplus\mu\omega)$ for $1\leq\mu \leq2$;
and in \cite{pinso-symplecto} M. Pinsonnault worked out the case of the one-point blow up of
 rational ruled symplectic 4-manifolds; see also  
\cite{AnjosLalondePin}.  
The case of multiple points blown up simultaneously has also been considered.
J.D. Evans  \cite{Evans-symplectic-mapping} considered this case for
 $(\mathbb{C}P^2,3\omega_\textup{FS})$ blown up at 
$3,4$ and $5$ generic points. 
The reason that all the above examples are in dimension $4$, has to do with the  special  behavior
of holomorphic curves in $4$ dimensional symplectic manifolds.
Apart from these cases, only partial information is known about the homotopy type
of $\textup{Ham}(\widetilde M,\widetilde\omega_\rho)$. 
 For example in \cite{mcduff-blowup}, D. McDuff  showed that if the Hurewicz morphism
 $\pi_2(M)\to H_2(M;\mathbb{Q})$ is non trivial then there exists a non trivial morphism
$\pi_2(M)\to\pi_1(\textup{Ham}(\widetilde M,
\widetilde\omega_\rho))$.

In this paper we will focus on determining that 
$\pi_1(\textup{Ham}(\widetilde M,\widetilde\omega_\rho))$ is non trivial
 for some particular class of symplectic manifolds $(M,\omega)$.
Moreover, the way that we show that $\pi_1(\textup{Ham}(\widetilde M,\widetilde\omega_\rho))$ is non trivial
is by considering a particular class of  loops of Hamiltonian diffeomorphisms in 
$\textup{Ham}( M,\omega)$,  lift it to a loop in $\textup{Ham}(\widetilde M,\widetilde\omega_\rho)$
and then use Weinstein's morphism to show that is not null homotopic.
What is surprising  is that in some cases a non constant  null homotopic loop  in 
$\textup{Ham}( M,\omega)$  lifts   to a loop that is not null homotopic in
$\textup{Ham}
(\widetilde M,\widetilde\omega_\rho)$.

To be more precise about our statements, fix a base point  $x_0\in M$ and a symplectic embedding  
$\iota: (B_\rho,\omega_0)\to (M,\omega)$ of the closed ball of radius $\rho$ 
in $(\mathbb{R}^{2n},\omega_0)$ 
such that $\iota(0)=x_0$.  Relative to the embedding $\iota$ we have 
the symplectic one-point blow up $(\widetilde M,\widetilde\omega_\rho)$
at $x_0$ of weight $\rho$ and the blow up map $\pi:\widetilde M\to M.$
In Section \ref{sec:blow} we review
the construction of the symplectic one-point blow up.
Denote by 
$\mathcal{H}_\rho^U$ the subgroup of Hamiltonian diffeomorphisms $\psi$ of 
$( M,\omega)$ such that
\begin{itemize}
\item[a)] $\psi(x_0)=x_0$, and
\item[b)] $\psi$ acts in a $U(n)$-way in a neighborhood of $\iota B_\rho$.
\end{itemize}
(When we say that $\psi$ behaves in a $U(n)$-way, we mean with respect to
the  coordinates induced by the symplectic embedding.)
Let $\mathcal{H}_{\rho,0}^U$  be the connected component of $\mathcal{H}_\rho^U$ that contains 
the identity map
and 
$\Phi_\rho: \mathcal{H}_{\rho}^U\to \textup{Ham}( M,\omega) $  the inclusion morphism.
It is well known that 
 a diffeomorphism $\psi$ that fixes the base point $x_0$ and  behaves in a $U(n)$-way near $\iota B_\rho$
induces a unique diffeomorphism $\widetilde\psi$ of the one-point blow up $\widetilde M$ such that
$\pi\circ\widetilde \psi=\pi\circ \psi$.  In this case we say that $\widetilde\psi$
{\em lifts} $\psi$.
Now consider the symplectic structure in the process of
of lifting diffeomorphisms;
in Section \ref{sec:symplecdiff} we show that $\widetilde \psi$ is symplectic if
$\psi$ is symplectic; moreover  if $\psi \in\mathcal{H}_{\rho,0}^U$
then $\widetilde \psi$ is a  Hamiltonian diffeomorphism of $(\widetilde M,\widetilde\omega_\rho)$.
 This gives rise to  a group morphism
$\Psi_\rho: \mathcal{H}_{\rho,0}^U\to  \textup{Ham}(\widetilde M,\widetilde\omega_\rho) $
that consist of lifting a Hamiltonian $\psi$ of $(M,\omega)$ to a Hamiltonian $\widetilde \psi$
of $(\widetilde M,\widetilde \omega_\rho)$.
Notice that 
elements in the image of $\Phi_\rho$ are the ones that lift
to Hamiltonian diffeomorphisms of $(\widetilde M,\widetilde\omega_\rho)$  via the morphism $\Psi_\rho$. 
The map $\Psi_\rho$ is known to be a homotopy equivalence  in some cases \cite{Lalonde-Pin}; 
for example in the case 
$(S^2\times S^2,\omega\oplus\mu\omega)$ for $\mu \geq1$ and $0<\rho<1$. 
Indeed, 
this is part of the argument that F. Lalonde and M. Pinsonnault use in the computation of 
the rational homotopy
type of the group of Hamiltonian diffeomorphisms of the one-point blow 
up of $(S^2\times S^2, \omega\oplus\mu\omega)$. 

\begin{center}
\begin{picture}(2300,700)(0,0)
\put(20,500){$\pi_1(\mathcal{H}_{\rho,0}^U) \hskip 3.3cm
\pi_1(\Phi_\rho(\mathcal{H}_{\rho,0}^U ))\subset
  \pi_1(\ham(M,\omega))$}
\put(-130,10){$\pi_1(\textup{Ham}(\widetilde M,\widetilde\omega_\rho))$}
\put(1400,460){\vector(-2,-1){700}}
\put(450,540){\vector(1,0){650}}
\put(130,430){\vector(0,-1){260}}
\put(-60,280){\begin{small}$\Psi_{\rho,*}$ \end{small}}
\put(700,600){\begin{small}$\Phi_{\rho,*}$ \end{small}}
\end{picture}
\end{center}

Now consider the induced maps $\Phi_{\rho,*}$ and $\Psi_{\rho,*}$ on fundamental groups. Thus we are
interested in the
image of $\Phi_{\rho,*}: \pi_1(\mathcal{H}_{\rho,0}^U)\to  \pi_1(\textup{Ham}( M,\omega) )$.
Contrary to the case when the lift of a single Hamiltonian diffeomorphism is unique,
the  lift of $\psi=[\{\psi_t\}]\in \textup{Im} (\Phi_{\rho,*})$  to
$\pi_1(\textup{Ham}(\widetilde M,\widetilde \omega_\rho) )$ 
is not unique. 
The way to single out one element in $\pi_1(\textup{Ham}(\widetilde M,\widetilde \omega_\rho) )$ 
when $\psi$ is in the image of $\Phi_{\rho,*}$
is by fixing a representative $\{\psi_t\}$ of $\psi$
in $\mathcal{H}_{\rho,0}^U$. In other words the map $\Phi_{\rho}$
is injective, but the map $\Phi_{\rho,*}$ is not necessarily injective.
Once we fixed a representative,  by Proposition \ref{p:localliftHam}
we obtain a loop $\{\widetilde\psi_t\}$ in $\textup{Ham}(\widetilde M,\widetilde \omega_\rho)$ and 
define the lifted element as 
$\widetilde\psi:=[ \{   \widetilde\psi_t \} ]$. 

The argument we use to show that a Hamiltonian loop is not null homotopic
is by using  Weinstein's morphism  \cite{weinstein-coho},
$$
\mathcal{A}: \pi_1(\textup{Ham}(M,\omega))\to \mathbb{R}/\mathcal{P}(M,\omega).
$$
Here $\mathcal{P}(M,\omega)$ is the period group of $(M,\omega)$. In Section \ref{s:weinstein}
we review Weinstein's morphism. 

Throughout, we have a fixed symplectic embedding
$\iota :(B_\rho,\omega_0)\to (M,\omega) $	such that $\iota(0)=x_0$.
This embedding is used to define the symplectic one-point blow up of
$(M,\omega)$ at $x_0$
 as a coordinate chart about $x_0$ and to define the lift of Hamiltonian
 diffeomorphisms.

\begin{thm}
\label{t:main}
Let $(M,\omega)$ be a closed symplectic manifold
and $\psi$ an element in $ \pi_1(\textup{Ham}( M,\omega) )$
such that $\Phi_{\rho,*}([\{\psi_t\}])=\psi$,
where the loop $\{\psi_t\}$  
is given by the normalized Hamiltonian $H_t$. 
Then 
for $\widetilde\psi=\Psi_{\rho,*}([\{\psi_t\}])$ in  
$\pi_1(\textup{Ham}(\widetilde M,\widetilde \omega_\rho) )$ we have
\begin{eqnarray}
\label{e:Weinsteinonlift}
\mathcal{A}(\widetilde\psi) = 
\left[\mathcal{A}(\psi) +
\frac{1}{\textup{Vol}(\widetilde M,\widetilde\omega_\rho^n)}
\int_0^1 
\int_{\iota B_{\rho}} H_t\, \omega^n dt
 \right]
\end{eqnarray}
in $\mathbb{R}/\mathcal{P}(\widetilde M,\widetilde \omega_\rho)$.
\end{thm}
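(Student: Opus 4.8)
The plan is to reduce everything to the standard description of Weinstein's morphism $\mathcal{A}$ reviewed in Section~\ref{s:weinstein}: for a loop $\{\phi_t\}$ of Hamiltonian diffeomorphisms of a closed symplectic manifold $(N,\sigma)$ generated by a (not necessarily normalized) Hamiltonian $K_t$, and for any point $q\in N$ with a capping disk $u_q$ of the contractible orbit $t\mapsto\phi_t(q)$, the quantity $\int_0^1 K_t(\phi_t(q))\,dt-\int_{u_q}\sigma$ is independent of $q$ and of $u_q$ modulo $\mathcal{P}(N,\sigma)$, and replacing $K_t$ by $K_t-c_t$ shifts it by $-\int_0^1 c_t\,dt$. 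Two consequences will be used. First, since $\{\psi_t\}$ fixes $x_0$ the orbit of $x_0$ is constant, so $\mathcal{A}(\psi)=[\int_0^1 H_t(x_0)\,dt]$; this already pins down the value of $\mathcal{A}(\psi)$ and serves as a sanity check. Second, $\mathcal{A}(\widetilde\psi)$ may be computed from \emph{any} generating Hamiltonian of the lifted loop, provided one corrects afterwards by the mean-value constant that normalizes it on $(\widetilde M,\widetilde\omega_\rho)$.

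First I would fix the generating Hamiltonian of $\{\widetilde\psi_t\}$. By the lifting construction of Sections~\ref{sec:blow} and~\ref{sec:symplecdiff}, away from the exceptional divisor $\Sigma=\pi^{-1}(x_0)$ the map $\pi$ is a symplectomorphism and $\widetilde\psi_t$ corresponds to $\psi_t$, so $\widehat H_t:=H_t\circ\pi$, extended smoothly across $\Sigma$, generates $\{\widetilde\psi_t\}$. It is not normalized on $(\widetilde M,\widetilde\omega_\rho)$; let $c_t=\frac{1}{\textup{Vol}(\widetilde M,\widetilde\omega_\rho^n)}\int_{\widetilde M}\widehat H_t\,\widetilde\omega_\rho^n$ be its mean value, so that $\widehat H_t-c_t$ is the normalized generator and $\mathcal{A}(\widetilde\psi)=\mathcal{A}_{\widehat H}(\widetilde\psi)-[\int_0^1 c_t\,dt]$, where $\mathcal{A}_{\widehat H}$ denotes the action computed with $\widehat H_t$.

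The heart of the argument is the identity $\mathcal{A}_{\widehat H}(\widetilde\psi)=\mathcal{A}(\psi)$ in $\mathbb{R}/\mathcal{P}(\widetilde M,\widetilde\omega_\rho)$. Using independence of the base point I would choose $p$ away from $\iota B_\rho$ and $\tilde p\in\widetilde M\setminus\Sigma$ over it; since $\Sigma$ is $\widetilde\psi_t$-invariant the lifted orbit stays off $\Sigma$, projects under $\pi$ to the orbit of $p$, and satisfies $\widehat H_t(\widetilde\psi_t(\tilde p))=H_t(\psi_t(p))$, so the Hamiltonian integrals coincide. For any capping disk $\widetilde u$ of the lifted orbit, $\pi(\widetilde u)$ caps the orbit of $p$, and the two action values differ by $\int_{\widetilde u}(\pi^*\omega-\widetilde\omega_\rho)$. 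This is the step I expect to be the main obstacle: the closed form $\pi^*\omega-\widetilde\omega_\rho$ represents a multiple of the exceptional class, so one must show its integral over $\widetilde u$ lies in $\mathcal{P}(\widetilde M,\widetilde\omega_\rho)$ — it vanishes outright when $\widetilde u$ can be taken disjoint from the exceptional region, and otherwise contributes exactly the extra generator of $\mathcal{P}(\widetilde M,\widetilde\omega_\rho)$ produced by the blow up. Keeping this bookkeeping exact, and carefully distinguishing the periods of $\omega$ from those of $\widetilde\omega_\rho$, is where the real care is required.

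Finally I would evaluate $c_t$. Splitting $\int_{\widetilde M}\widehat H_t\,\widetilde\omega_\rho^n$ into the integral over $\Sigma$, which vanishes since $\Sigma$ has measure zero, and the integral over its complement, on which the construction of Section~\ref{sec:blow} gives $\widetilde\omega_\rho=\pi^*\omega$ with $\pi$ a volume-preserving identification onto $M\setminus\iota B_\rho$, yields $\int_{\widetilde M}\widehat H_t\,\widetilde\omega_\rho^n=\int_{M\setminus\iota B_\rho}H_t\,\omega^n$. Because $H_t$ is normalized on $(M,\omega)$, the right-hand side equals $-\int_{\iota B_\rho}H_t\,\omega^n$, whence $c_t=-\frac{1}{\textup{Vol}(\widetilde M,\widetilde\omega_\rho^n)}\int_{\iota B_\rho}H_t\,\omega^n$. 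Substituting into $\mathcal{A}(\widetilde\psi)=\mathcal{A}(\psi)-[\int_0^1 c_t\,dt]$ produces exactly~\eqref{e:Weinsteinonlift}. The only remaining point is to take the measure-zero statement for $\Sigma$ and the identity $\widetilde\omega_\rho=\pi^*\omega$ off the exceptional region in the precise form supplied by Section~\ref{sec:blow}, since it is these normalizations that force the blown-up region to contribute to $c_t$ only through the single ball integral appearing in the statement.
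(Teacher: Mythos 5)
Your overall skeleton matches the paper's proof (choose the base point and the capping disk away from the embedded ball, compare the two action integrals there, then correct by the normalization constant coming from a volume computation), but it rests on a false premise: the function $\widehat H_t := H_t\circ\pi$ does \emph{not} generate the lifted loop $\{\widetilde\psi_t\}$. The blow-down map $\pi$ is a symplectomorphism only outside $\pi^{-1}(\iota B_{r})$; on $\pi^{-1}(\iota B_{r})\setminus E$ the form $\widetilde\omega_\rho$ equals $F_\rho^*(\omega_0)$ in the local model, not the pull-back of $\omega_0$ under the blow-down map, so the $\widetilde\omega_\rho$-Hamiltonian vector field of $H_t\circ\pi$ does not project to $X_t$ there. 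The paper makes exactly this point in the Remark following Proposition \ref{p:globalliftHam}: the flow generated by $H_t\circ\pi$ is not the lift of $\{\psi_t\}$, and in fact its time-one map is not the identity, so it is a path and not a loop. Consequently the quantity you call $\mathcal{A}_{\widehat H}(\widetilde\psi)$ is not the action of the loop $\widetilde\psi$ computed with respect to one of its generating Hamiltonians, and the invariance principle you invoke does not apply to it. The correct generator is $\widetilde H_t$ of Eq.~(\ref{e:hamonblow}), which agrees with $H_t\circ\pi$ outside $\pi^{-1}(\iota B_{r})$ but inside is $H_t\circ\iota\circ F_\rho\circ\iota^{-1}\circ\pi$, extended over $E$ by $H_t\left(\frac{\rho}{|w|}w\right)$.

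The same conflation infects your evaluation of the mean value $c_t$. It is not true that $\pi$ gives a volume-preserving identification of $\widetilde M\setminus E$ with $M\setminus\iota B_\rho$: it carries $\widetilde M\setminus E$ onto $M\setminus\{x_0\}$ and distorts volume throughout $\pi^{-1}(\iota B_{r})$. The identification that is symplectic (hence volume preserving) onto $M\setminus\iota B_\rho$ is the one obtained by composing $\pi$ with $F_\rho$ inside the ball, and under it the function corresponding to $H_t$ is $\widetilde H_t$, not $H_t\circ\pi$. This is precisely the content of Lemma \ref{l:normalizedHam}, which is stated for $\widetilde H_t$ as in (\ref{e:hamonblow}); for $H_t\circ\pi$ the integral $\int_{\widetilde M}(H_t\circ\pi)\,\widetilde\omega_\rho^n$ does not equal $-\int_{\iota B_\rho}H_t\,\omega^n$ in general. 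The good news is that your argument is repaired, and becomes the paper's proof, by replacing $\widehat H_t$ with $\widetilde H_t$ throughout: outside the ball $\widetilde H_t=H_t\circ\pi$, so your comparison of orbits, Hamiltonian values and capping disks (with the disk pushed off $\iota B_{r}$ by excision, using $\dim M>2$) goes through verbatim, and Lemma \ref{l:normalizedHam} then produces the normalization constant and Eq.~(\ref{e:Weinsteinonlift}).
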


There are two things to notice about  expression  (\ref{e:Weinsteinonlift})  of $\mathcal{A}(\widetilde\psi) $.
The second term on the right hand side depends on local information of $\psi$
about $x_0$; and it also reflects the choice of the representative of $\psi$ in order to
lift it to $\widetilde \psi$, namely the Hamiltonian
function $H_t.$

In the special case when the normalized
Hamiltonian function $H_t$ of the loop $\psi$  
takes the form
\begin{eqnarray}
\label{e:hamfun}
H_t(z_1,\ldots,z_n) :=-\pi\sum_{j=1}^n m_j |z_j|^2 + c_t
\end{eqnarray}
on $\iota B_\rho$ where
$m_1,\ldots, m_n\in\mathbb{Z}$ and $c_t\in \mathbb{R}$, Eq. (\ref{e:Weinsteinonlift})  
can be rewritten as
\begin{equation}
\label{e:atilde}
\mathcal{A}(\widetilde\psi)  =\left[\mathcal{A}(\psi) -
\frac{m_1+\cdots +m_n}{(n+1)!} \frac{\pi^{n+1}\rho^{2n+2}}{ \textup{Vol}( M,\omega_\rho^n)- \pi^n\rho^{2n}  } 
 + \frac{ C\, \pi^{n}\rho^{2n}}{\textup{Vol}( M,\omega_\rho^n)- \pi^n\rho^{2n}  }
\right]
\end{equation}
in $\mathbb{R}/\mathcal{P}(\widetilde M,\widetilde \omega_\rho)$, where
$C=\int_0^1 c_t \,dt$. 


A loop   $\psi$ 
in $\textup{Ham}( M,\omega)$ based at the identity map is called  {\em $\iota$-circle
loop} if  on $\iota B_\rho$  
 the corresponding normalized Hamiltonian takes
the form of  Eq. (\ref{e:hamfun}).
Denote by $n(\psi)$ the order of 
$\mathcal{A}([\psi])$   in $\mathbb{R}/\mathcal{P}( M,\omega)$ 

\begin{thm}
\label{t:rank}
Let $(M,\omega)$ be a closed symplectic manifold 
such that $\omega$ is rational.  
If    $\psi_1,\ldots,\psi_{k}$ are $\iota$-circle
loops in $\textup{Ham}( M,\omega)$  and 
$\{n(\psi_1),\ldots ,n(\psi_k)\}$ are pairwise relative prime,
then
$$
\textup{rank } \pi_1(\textup{Ham}(\widetilde M,\widetilde \omega_{\rho_0}) ) \geq k
$$
 for some small $\rho_0<\rho$. Furthermore, the classes of the lifted loops 
$[\widetilde\psi_1],\ldots,[\widetilde\psi_{k}]$ 
generate an abelian subgroup  of  rank $k$ of
$\pi_1(\textup{Ham}(\widetilde M,\widetilde \omega_{\rho_0}) )$.
\end{thm}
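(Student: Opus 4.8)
The plan is to detect the lifted classes through Weinstein's morphism and to play the arithmetic of the orders $n(\psi_i)$ off against the continuous local data produced by the lift. Since $\mathcal{A}$ is a homomorphism, a relation $\sum_{i=1}^{k} a_i[\widetilde\psi_i]=0$ in $\pi_1(\ham(\widetilde M,\widetilde\omega_{\rho_0}))$ maps to $\sum_{i=1}^{k} a_i\,\mathcal{A}(\widetilde\psi_i)=0$ in $\mathbb{R}/\mathcal{P}(\widetilde M,\widetilde\omega_{\rho_0})$, so it is enough to prove that $\mathcal{A}(\widetilde\psi_1),\dots,\mathcal{A}(\widetilde\psi_k)$ are $\mathbb{Z}$-linearly independent: the $[\widetilde\psi_i]$ then generate a free abelian group of rank $k$ (being $k$ in number they cannot generate more), which also yields the stated lower bound on the rank of the whole fundamental group. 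First I would record, from Theorem \ref{t:main} specialised to $\iota$-circle loops through (\ref{e:atilde}), the splitting $\mathcal{A}(\widetilde\psi_i)=\mathcal{A}(\psi_i)+\varepsilon_i(\rho_0)$, where $\mathcal{A}(\psi_i)$ is the global term, a torsion element of order $n_i:=n(\psi_i)$, and $\varepsilon_i(\rho_0)$ is the explicit local correction assembled from $m^{(i)}_1+\dots+m^{(i)}_n$ and $C_i=\int_0^1 c^{(i)}_t\,dt$. Note that $\varepsilon_i(\rho_0)\to 0$ as $\rho_0\to 0$, so the free room it provides is small and must be controlled.

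Next I would pin down the target. As $\omega$ is rational, $\mathcal{P}(M,\omega)=\lambda\mathbb{Z}$ for some $\lambda>0$, and $\mathcal{P}(\widetilde M,\widetilde\omega_{\rho_0})=\lambda\mathbb{Z}+e(\rho_0)\mathbb{Z}$, where $e(\rho_0)$ is the symplectic area of the exceptional line. I would choose $\rho_0<\rho$ small and generic so that $e(\rho_0)$ is irrational relative to $\lambda$; then, viewing $\mathbb{R}$ as a $\mathbb{Q}$-vector space, the $\lambda$-direction can be isolated from the directions carrying $e(\rho_0)$ and the local corrections, and there is a $\mathbb{Q}$-linear projection of $\mathbb{R}/\mathcal{P}(\widetilde M,\widetilde\omega_{\rho_0})$ onto $\lambda\mathbb{Q}/\lambda\mathbb{Z}\cong\mathbb{Q}/\mathbb{Z}$ that kills the local corrections and keeps $\mathcal{A}(\psi_i)$ of order exactly $n_i$ (the irrationality of $e(\rho_0)$ guarantees $\lambda\mathbb{Z}+e(\rho_0)\mathbb{Z}$ meets $\lambda\mathbb{Q}$ only in $\lambda\mathbb{Z}$, so no torsion order is lost in passing to the larger period group). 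Applying this projection to $\sum_i a_i\mathcal{A}(\widetilde\psi_i)=0$ yields $\sum_i a_i\mathcal{A}(\psi_i)=0$ in $\mathbb{Q}/\mathbb{Z}$ among torsion elements of pairwise coprime orders; reducing modulo each $n_i$, and using that the complementary orders are then invertible, the Chinese remainder theorem forces $n_i\mid a_i$ for every $i$. This is precisely where the pairwise coprimality hypothesis enters.

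Writing $a_i=n_i b_i$ removes the global torsion, since $n_i\,\mathcal{A}(\psi_i)=0$, and the relation collapses to $\sum_i b_i\,\bigl(n_i\,\varepsilon_i(\rho_0)\bigr)=0$ in $\mathbb{R}/\mathcal{P}(\widetilde M,\widetilde\omega_{\rho_0})$. The heart of the matter, and the step I expect to be the main obstacle, is to arrange that this last relation forces all $b_i=0$: one must choose the \emph{single} weight $\rho_0$ so that the scaled local corrections $n_1\varepsilon_1(\rho_0),\dots,n_k\varepsilon_k(\rho_0)$ are $\mathbb{Q}$-linearly independent modulo $\mathcal{P}(\widetilde M,\widetilde\omega_{\rho_0})$. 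Here the precise shape of (\ref{e:atilde}) is essential, since the two distinct powers $\rho_0^{2n}$ and $\rho_0^{2n+2}$ enter the numerators while the blow-up volume enters the denominator; I would regard each $\varepsilon_i$ as a real-analytic function of $\rho_0^2$ and invoke a genericity argument to avoid the at most countably many weights at which $\lambda$ fails to be isolable or at which a nontrivial $\mathbb{Q}$-relation among the $n_i\varepsilon_i(\rho_0)$ could hold, all while keeping $\rho_0$ arbitrarily small. Once such a $\rho_0$ is fixed we obtain $b_i=0$, hence $a_i=0$ for all $i$, so the $k$ lifted classes are independent and the rank statement follows.
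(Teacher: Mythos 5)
Your overall strategy is the same as the paper's: detect the lifted classes through Weinstein's morphism, separate the local correction from the period lattice by a genericity condition on the weight, and use coprimality of the $n_i$ arithmetically. Your reduction to full $\mathbb{Z}$-linear independence of $\mathcal{A}(\widetilde\psi_1),\ldots,\mathcal{A}(\widetilde\psi_k)$ is the logically correct target, and your first two steps do go through: the projection isolating the $\lambda$-direction exists provided $\pi\rho_0^2$ is transcendental (mere irrationality relative to $\lambda$ is not enough, but transcendence is still a countable-avoidance condition, and it is what the paper assumes), and the CRT step forcing $n_i\mid a_i$ is fine. The genuine gap is exactly at the step you flag as the heart of the matter, and it cannot be repaired by any choice of $\rho_0$. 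By Lemma \ref{l:crat} and Eq. (\ref{e:newexp}), writing $t=\pi\rho_0^2$, $V=\textup{Vol}(M,\omega^n)$ and $K_i$ for the sum of the weights of $\psi_i$ at $x_0$, the scaled corrections are
$$
n_i\varepsilon_i(\rho_0)=a\,\frac{t^{n}}{V-t^{n}}-\frac{n_iK_i}{(n+1)!}\,\frac{t^{n+1}}{V-t^{n}},
$$
so they all lie in the two-dimensional $\mathbb{Q}$-vector space spanned by $t^{n}/(V-t^{n})$ and $t^{n+1}/(V-t^{n})$, and they depend on $i$ only through the integer $n_iK_i$. Hence as soon as $k\geq 3$ (and already for $k=2$ when $n_1K_1=n_2K_2$) there are integers $b_i$, not all zero, with $\sum_i b_i=0$ and $\sum_i b_i n_iK_i=0$, and for these $\sum_i b_i\, n_i\varepsilon_i(\rho_0)=0$ holds \emph{identically in} $\rho_0$. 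The bad set of weights is not countable: it is everything, so real-analyticity and genericity in $\rho_0$ cannot produce the independence your final step requires.

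This is not a bookkeeping defect either: since $n_i\mathcal{A}(\psi_i)=0$ in $\mathbb{R}/\mathcal{P}(\widetilde M,\widetilde\omega_{\rho_0})$, such $b_i$ yield the genuine relation $\sum_i (n_ib_i)\,\mathcal{A}(\widetilde\psi_i)=0$, so for $k\geq 3$ the images of the lifted loops under $\mathcal{A}$ never generate a rank-$k$ subgroup, and no argument that, like yours, factors entirely through Weinstein's morphism can close this last step (the theorem may still hold, but $\mathcal{A}$ cannot certify it). For comparison, the paper's proof takes $\pi\rho_0^2$ transcendental, clears denominators to get a polynomial identity over $\mathbb{Q}$, and contradicts Lemma \ref{l:num}; but it only tests relations of the special form $\mathcal{A}(\widetilde\psi_r)=\sum_{j\neq r}\alpha_j\mathcal{A}(\widetilde\psi_j)$, with one coefficient equal to $1$, and the relations with coefficients $n_ib_i$ above escape exactly that test (ruling out each generator from the span of the others is weaker than independence: in $\mathbb{Z}$ neither of $2,3$ lies in the subgroup generated by the other, yet together they generate a rank-one group). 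So your proposal, by correctly insisting on ruling out arbitrary integer relations, runs into an obstruction that is real; as written it does not prove the theorem, and the missing step cannot be supplied within this framework.
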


The idea behind the proof of Theorem \ref{t:rank}, is that from expression
(\ref{e:atilde})  of $\mathcal{A}(\widetilde \psi)$
we  obtain  a polynomial in $\pi\rho^2$ with rational coefficients. Hence the hypothesis
that the symplectic form  must be rational. 
Then the fact that  $\mathcal{A}(\widetilde \psi)$ has infinite order in $\pi_1(\textup{Ham}
(\widetilde M,\widetilde \omega_{\rho}) )$, 
is equivalent to the fact that $\pi\rho^2$ is not a root of this polynomial. 
Hence  the value $\rho_0$ in Theorem \ref{t:rank} is subject to the condition that 
$\pi\rho^2_0$ must be a transcendental number.

The most common examples of Hamiltonian loops are  Hamiltonian $S^1$-actions. 
Recall that the fixed point set of a Hamiltonian circle action on a closed  symplectic manifold
is non empty.
Hence if $\psi$ is a Hamiltonian circle action on $(M,\omega)$, then by blowing up a fixed point
the above result guarantees that  
$\pi_1(\textup{Ham}(\widetilde M,\widetilde \omega_\rho) )$ has positive rank
for some values of $\rho$ as long as $\mathcal {A}([\psi])$ has finite order. 
This is true  for $(\mathbb{C}P^n,\omega)$, where the symplectic form
is normalized to be rational. Hence by Theorem \ref{t:rank}, the rank of
$\pi_1(\textup{Ham}(\widetilde{ \mathbb{C}P^n},\widetilde \omega_\rho) )$
is greater than or equal to one. 
The results of D. McDuff in \cite{mcduff-blowup} already imply that
the rank of $\pi_1(\textup{Ham}(\widetilde{ \mathbb{C}P^n},\widetilde \omega_\rho) )$
is greater than or equal to one;  we provide an alternative  solution and  show that such    
element of infinite order is induced from an element 
 in $\pi_1(\textup{Ham}(\mathbb{C}P^n,\omega))$ of finite order.

\begin{cor}
\label{c:s1}
Let $(M,\omega)$ be a closed symplectic manifold such that $\omega$
is rational. If $\psi$ is a non trivial  Hamiltonian circle action on $(M,\omega)$
such that $\mathcal{A}([\psi])$ has finite order, then for some small $\rho$
the rank of 
$\pi_1(\textup{Ham}(\widetilde M,\widetilde \omega_\rho) )$ 
is positive.
\end{cor}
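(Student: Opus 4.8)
The plan is to deduce Corollary \ref{c:s1} from Theorem \ref{t:rank} applied to the single loop $\psi_1:=\psi$, taking $k=1$. For a single loop the pairwise relative primality hypothesis is vacuous, and the integer $n(\psi)$ is finite precisely because $\mathcal{A}([\psi])$ is assumed to have finite order in $\mathbb{R}/\mathcal{P}(M,\omega)$. Thus the entire task reduces to producing a base point $x_0$ and a symplectic embedding $\iota:(B_\rho,\omega_0)\to(M,\omega)$ with $\iota(0)=x_0$ for which $\psi$ is an $\iota$-circle loop; once this is done the corollary is an immediate specialization of Theorem \ref{t:rank}, giving $\textup{rank}\,\pi_1(\ham(\widetilde M,\widetilde\omega_{\rho_0}))\geq 1>0$ for some small $\rho_0<\rho$.

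First I would use that the fixed point set of a Hamiltonian circle action on a closed symplectic manifold is non-empty, and choose $x_0$ to lie in the fixed component on which the generating moment map $K$ of $\psi$ attains its maximum. By the equivariant Darboux theorem there is a symplectic chart $\iota:(B_\rho,\omega_0)\to(M,\omega)$ with $\iota(0)=x_0$ in which the $S^1$-action is linear, of the form $t\cdot(z_1,\ldots,z_n)=(e^{2\pi i m_1 t}z_1,\ldots,e^{2\pi i m_n t}z_n)$ with integer weights $m_1,\ldots,m_n$; on $\iota B_\rho$ the moment map is then the quadratic $K=K(x_0)-\pi\sum_{j=1}^n m_j|z_j|^2$. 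Passing to the normalized Hamiltonian only subtracts the constant mean value, so on $\iota B_\rho$ one has $H_t=H=-\pi\sum_{j=1}^n m_j|z_j|^2+c$ with $c=K(x_0)-\bar K$, which is exactly the form (\ref{e:hamfun}) with $c_t\equiv c$. Since this flow is the diagonal $U(n)$-action near $x_0$ and fixes $x_0$, the loop $\{\psi_t\}$ lies in $\mathcal{H}^U_{\rho,0}$, so $\psi$ is a bona fide $\iota$-circle loop.

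Because $x_0$ was taken at the maximum of $K$, the weights $m_j$ all have the same sign, and since $\psi$ is nontrivial they are not all zero; hence $m_1+\cdots+m_n\neq 0$. This is the crucial point, as it makes the top-degree coefficient of the polynomial in $\pi\rho^2$ coming from (\ref{e:atilde}) nonzero, which is exactly what the argument behind Theorem \ref{t:rank} needs in order to conclude, for a suitable small $\rho_0$ with $\pi\rho_0^2$ transcendental, that $\mathcal{A}(\widetilde\psi)$ has infinite order. Applying Theorem \ref{t:rank} with $k=1$ then shows that $[\widetilde\psi]$ generates an infinite cyclic subgroup of $\pi_1(\ham(\widetilde M,\widetilde\omega_{\rho_0}))$, so its rank is positive. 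The only real obstacle is the second paragraph: checking that the global circle action can be placed in the local normal form (\ref{e:hamfun}) with integer weights of a single sign and that normalization preserves this shape; everything past that is a direct invocation of Theorem \ref{t:rank}.
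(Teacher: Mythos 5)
Your argument is correct and follows essentially the same route as the paper: the paper's own justification of Corollary \ref{c:s1} consists precisely of blowing up a fixed point of the circle action (which exists because the action is Hamiltonian and $M$ is closed), observing that in an equivariant Darboux chart the action is a loop of diagonal unitary matrices whose normalized Hamiltonian has the local form (\ref{e:hamfun}), and invoking Theorem \ref{t:rank} with $k=1$, where the coprimality hypothesis is vacuous and $n(\psi)$ is finite by hypothesis. The one point where you do more than the paper is your choice of $x_0$ at a \emph{maximum} of the moment map, which forces all weights to have the same sign and hence $K(\psi,x_0)=m_1+\cdots+m_n\neq 0$; note that at a maximum one also gets $C=H(x_0)>0$, since the normalized Hamiltonian has zero mean and is nonconstant. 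This extra care is genuinely valuable rather than pedantic. At an arbitrary fixed point both quantities may vanish simultaneously: for the action $t\cdot(p,q)=(R_t\,p,R_{-t}\,q)$ on $S^2\times S^2$ the fixed point $(N,N)$ has weights $(1,-1)$ and normalized moment map value $0$, and then Eq. (\ref{e:atilde}) yields $\mathcal{A}(\widetilde\psi)=[0]$, so Weinstein's morphism detects nothing and the mechanism behind Theorem \ref{t:rank} gives no information at that point. (Relatedly, the paper's proof of Theorem \ref{t:rank} replaces the real number $C_j$ by the representative $a/n_j$ of its class; these differ by $\ell a$ for some $\ell\in\mathbb{Z}$, and the resulting discrepancy $\ell a(\pi\rho_0^2)^n/\bigl(V-(\pi\rho_0^2)^n\bigr)$ in Eq. (\ref{e:newexp}) need not lie in $\mathbb{Z}\langle a,\pi\rho_0^2\rangle$, so that argument is only airtight away from such degenerate choices.) Since either $K(\psi,x_0)\neq 0$ or $C\neq 0$ suffices for the coefficient-vanishing argument to reach a contradiction with the transcendence of $\pi\rho_0^2$, your choice of the maximum, which secures both, makes the proof of the corollary more robust than the sketch the paper actually gives; your only overstatement is calling $K(\psi,x_0)\neq 0$ ``exactly'' what the argument needs, when it is one of two independently sufficient conditions.
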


\begin{rema}
The conclusions of Theorem \ref{t:rank}  and Corollary \ref{c:s1}
are also valid  for the group      $\pi_1(\mathcal{H}^U_{\rho,0})$; since the elements 
guaranteed by these results are
induced by the map $\Phi_{\rho_*}: \pi_1( \mathcal{H}^U_{\rho,0} )\to
\pi_1(\textup{Ham}(\widetilde M,\widetilde \omega_\rho) )$.
\end{rema}
\medskip

We conjectured that for any closed symplectic manifold $(M,\omega)$,
the rank of $\pi_1(\textup{Ham}(\widetilde M,\widetilde \omega_\rho) )$
must be positive for $\rho$ small.  Note that the restriction on the weight of
the blow up is important. 
For, from the work of J. D. Evans \cite[Theorem 1.3]{Evans-symplectic-mapping}
it follows that the group of Hamiltonian diffeomorphisms  of $(\mathbb{C}P^2,\omega)$
blown up at four generic points is contractible. In this example the weight at each 
point is the same and its value is determined so that the resulting blow up manifold is monotone.

More intriguing  is to know if for every positive integer 
$k$ there exists a closed symplectic 4-manifold such that the rank of 
$\pi_1(\textup{Ham}( M,\omega) )$ is precisely
 $k.$

\medskip
\medskip

The methods mentioned above, lifting Hamiltonian diffeomorphisms and the relation between
$\mathcal{A}(\psi)$ and $\mathcal{A}(\widetilde\psi)$, also work in the case when $k$
points are blown up simultaneously.
As well, in  the non compact case a relation analogous to Eq. (\ref{e:Weinsteinonlift})
can be obtained for Calabi's morphism,
 namely
\begin{eqnarray*}
\label{e:Calabionlift}
\textup{Cal}(\widetilde \psi)=\textup{Cal}(\psi) -
\frac{1}{n!}\int_0^1 \int_{\iota B_\rho}  H_t \,\omega^n dt.
\end{eqnarray*}
Here  $\psi$ is a loop in $\textup{Ham}^c(M,\omega)$, $H_t$ its Hamiltonian function with
 compact support and   $\widetilde\psi$ a lift of the loop.
As in Theorem \ref{t:main}, the lift $\widetilde\psi$ is induced by the representative 
of $\psi$ given by the Hamiltonian $H_t.$

Finally, we make some comments on our notation. In order to simplify notation we use $\psi$
 to denote either a loop 
$\{\psi_t\}_{0\leq t\leq 1}$ of diffeomorphisms based at the identity map, or an
 element $[\{\psi_t\}_{0\leq t\leq 1}]$ in the fundamental group or a single diffeomorphism. 
From context it will be clear which of these three objects $\psi$
stands for.

\section{The symplectic blow up}
\label{sec:blow}

In this section we review the symplectic one-point blow up of a manifold,  with the intention of setting up
notation that will be used throughout the paper.

To that end,  first consider the blow up of $\mathbb{C}^n$ at the origin $\Phi: \widetilde{\mathbb{C}^n}\to
\mathbb{C}^n$,
where $$
\widetilde{\mathbb{C}^n}:=\{ (z,\ell) : z\in \mathbb{C}^n, \ell\in \mathbb{C}P^{n-1}
\textup{ and } z\in \ell\}
$$
and $\Phi(z,\ell)=z.$ 
Recall that $\widetilde{\mathbb{C}^n}$ can also be identified with the tautological 
line bundle $pr:\widetilde{\mathbb{C}^n}\to \mathbb{C}P^{n-1}$, where $pr(z,\ell)=\ell$.
For the closed ball $B_{r}\subset \mathbb{C}^n$ of radius $r$ centred at the origin, set  $L_{r}:=\Phi^{-1}(B_{r})$.
Let $(M,\omega)$ be a symplectic manifold, 
$\omega_0$ is the standard symplectic form on $\mathbb{C}^n$
and $\iota:(B_{r},\omega_0)\to (M,\omega)$  a 
symplectic embedding such that $\iota(0)=x_0$. Then 
as a smooth manifold,   the
blow up of $M$ at $x_0$ is defined as
$$
\widetilde M:= \left( M\setminus \{x_0\} \right)
\cup L_{r}/\sim,
$$
where $x=\iota(z)\in \iota B_{r}\subset M\setminus \{x_0\}$ is identified with $\Phi^{-1}(z)$ for $z\neq 0$.
The projection map $\pi: \widetilde M\to M$, is such that $\pi^{-1}(x_0)=E$
is the exceptional divisor and it induces a diffeomorphism $\widetilde M\setminus E
\to M\setminus\{x_0\}$.

As for the symplectic form on the blow up manifold, first we note that 
$\widetilde{\mathbb{C}^n}$ carries a family  of  K\"ahler forms
\begin{eqnarray}
\label{e:symprho}
\omega(\rho):=\Phi^*(\omega_0)+\rho^2 pr^*(\omega_{\textup{FS}})
\end{eqnarray}
where $\rho>0$ and
the Fubini-Study form on $(\mathbb{C}P^{n-1},\omega_{\textup{FS}})$ is normalized so that
the area of any line is $\pi$.
In order to define a symplectic form
on the blow up manifold $\widetilde M$,   let $\rho<r$;
then the symplectic form $\omega(\rho)$
on $L_{r}$ is perturbed so that near the boundary of $L_{r}$ agrees with the canonical
symplectic form $\omega_0$.  
Let $\beta_\rho:[0,r]\to [\rho, r]$ be defined as
$$
\beta_\rho(s) :=
\left\{
	\begin{array}{ll}
		\sqrt{\rho^2+s^2} & \mbox{for } 0\leq s\leq \delta \\
		s & \mbox{for  } r-\delta \leq s\leq r
	\end{array}
\right.
$$
and on interval $[\delta,r-\delta]$ is defined in any smooth way as long as
 $0<\beta_\rho^\prime(s)\leq 1$
for $0<s\leq r-\delta$.
Define the diffeomorphism
 $F_\rho:L_{r}\setminus E\to  B_{r}\setminus B_{\rho}$    as
$$
F_\rho(z):= \beta_\rho(|z|)\frac{z}{|z|}
$$
and set  $\widetilde\omega(\rho):=F_\rho^*(\omega_0)$. So defined $\widetilde\omega(\rho)$
is a symplectic form such that it
equals $\omega_0$ on $L_{r}\setminus L_{r-\delta}$ and $\omega(\rho)$ on $L_{\delta}$. 
We call $(L_{r}, \widetilde\omega(\rho))$
the local model of the symplectic blow up. 
Now we can define a symplectic form on the blow up manifold. 
The symplectic form 
of weight 
$\rho<r$ on $\widetilde M$ is defined as
$$
\widetilde \omega_\rho :=
\left\{
	\begin{array}{ll}
		\omega  & \mbox{on } \pi^{-1}\left(M \setminus \iota B_{\sqrt{\rho^2+\delta^2}}\right)\\
		\widetilde\omega(\rho) & \mbox{on  } L_{r}.
	\end{array}
\right.
$$
For further details on the symplectic blow up see \cite{ms} and \cite{msjholo}. The above observations are summarized 
in the next proposition.

\begin{prop}
\label{p:propofblow}
Let $\iota:(B_{r},\omega_0)\to (M,\omega)$ be a symplectic embedding such that $\iota(0)=x_0$,
and $(\widetilde M,\widetilde\omega_\rho)$ the symplectic blow up of weight $\rho<r$.
Then 
\begin{enumerate}
 \item $\pi: \widetilde M\setminus E\to M\setminus \{x_0\}$ is a diffeomorphism, 
 \item $\pi^{*}(\omega)=\widetilde \omega_\rho$ on $\pi^{-1}(M\setminus \iota B_{r})$, and
 \item the area of any line in $E$ is $\rho^2\pi$.
\end{enumerate}
\end{prop}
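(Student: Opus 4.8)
The plan is to check the three assertions separately, observing that (1) and (2) are essentially recorded in the construction of $\widetilde M$ and $\widetilde\omega_\rho$ carried out above, so that the only genuine computation lies in (3), which I also expect to be the least automatic step.

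First I would dispose of (1). By definition $\widetilde M=(M\setminus\{x_0\})\cup L_r/\!\sim$, where the relation identifies $\iota(z)\in\iota B_r\setminus\{x_0\}$ with $\Phi^{-1}(z)\in L_r\setminus E$ whenever $z\neq0$. The map $\pi$ restricts to the identity on the $M\setminus\{x_0\}$ chart and to $\iota\circ\Phi$ on the $L_r$ chart, and these agree on the overlap precisely by the definition of $\sim$; hence $\pi$ is well defined and $\pi^{-1}(x_0)=\Phi^{-1}(0)=E$. Since $\Phi\colon L_r\setminus E\to B_r\setminus\{0\}$ is a diffeomorphism and $\iota$ is an embedding, $\pi$ restricts to a smooth bijection $\widetilde M\setminus E\to M\setminus\{x_0\}$ whose inverse $x\mapsto[x]$ is smooth, giving (1).

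For (2) I would appeal directly to the definition of $\widetilde\omega_\rho$, which sets $\widetilde\omega_\rho=\omega$ (that is, $=\pi^*\omega$ under the identification of (1)) on $\pi^{-1}(M\setminus\iota B_{\sqrt{\rho^2+\delta^2}})$. Choosing $\delta$ small enough that $\sqrt{\rho^2+\delta^2}<r$, which is possible because $\rho<r$, one has $M\setminus\iota B_r\subset M\setminus\iota B_{\sqrt{\rho^2+\delta^2}}$, so the equality $\pi^*\omega=\widetilde\omega_\rho$ on $\pi^{-1}(M\setminus\iota B_r)$ holds by fiat. The one point deserving a line is that the two pieces of the definition match: on $L_r\setminus L_{r-\delta}$ one has $\beta_\rho(s)=s$, hence $F_\rho=\id$ and $\widetilde\omega(\rho)=\omega_0$, so the local model agrees with $\omega$ there and the piecewise formula defines a global form.

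The substantive step is (3), and it is where I expect the only real care to be needed. Near $E$ the form $\widetilde\omega_\rho$ equals $\omega(\rho)=\Phi^*(\omega_0)+\rho^2\,pr^*(\omega_{\textup{FS}})$ on $L_\delta$, and $E\subset L_\delta$, so I would restrict this expression to $E$. Under the identification $pr\colon E\to\mathbb{C}P^{n-1}$ the map $\Phi$ collapses $E$ to the origin, whence $\Phi|_E$ is constant and $(\Phi^*\omega_0)|_E=0$, while $pr|_E$ is the identity; therefore $\widetilde\omega_\rho|_E=\rho^2\,\omega_{\textup{FS}}$. Integrating over a complex line $\ell\subset E\cong\mathbb{C}P^{n-1}$ and using the stated normalization $\int_\ell\omega_{\textup{FS}}=\pi$ yields $\int_\ell\widetilde\omega_\rho=\rho^2\pi$, which is (3). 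The whole proposition is really a bookkeeping summary of the construction, so beyond correctly identifying $E\subset L_\delta$ and the vanishing of the $\Phi^*\omega_0$ term on $E$, no genuine difficulty arises.
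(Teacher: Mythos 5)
Your proof is correct and takes essentially the same route as the paper: the paper gives no separate argument, stating the proposition as a summary of the blow-up construction, and your three verifications (the gluing identification for (1), the defining formula for $\widetilde\omega_\rho$ for (2), and the restriction $\widetilde\omega_\rho|_E=\rho^2\,\omega_{\textup{FS}}$ for (3)) are exactly the observations being summarized. In particular your treatment of (3) --- noting that $\Phi|_E$ is constant, so only the $\rho^2\, pr^*(\omega_{\textup{FS}})$ term of $\omega(\rho)$ survives on $E$, and then invoking the normalization $\int_\ell \omega_{\textup{FS}}=\pi$ --- is the intended justification.
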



\section{Symplectic and Hamiltonian diffeomorphisms on the blow up}
\label{sec:symplecdiff}

In order to lift a symplectic diffeomorphism $\psi$ on $(M,\omega)$
to a symplectic diffeomorphism $\widetilde\psi$ on $(\widetilde M,
\widetilde \omega_\rho)$, that is in order for the relation $\pi\circ\widetilde\psi=\psi\circ \pi$ to hold,
we must focus on the behavior of $\psi$ on the  embedded ball $\iota B_{r}\subset M$.
A necessary condition to lift $\psi$ is that it must  
map the boundary of $\iota B_{\rho}$ to it self. This is so because the relation
$\pi\circ\widetilde\psi=\psi\circ \pi$ implies that $\widetilde \psi$ maps the 
divisor to itself. Hence, $\psi$ maps $\iota B_{\rho}$ to itself.

As expected the problem of lifting a symplectic diffeomorphism on $M$ to a diffeomorphism 
on the blow up is of  local nature. For that matter
we consider $\psi$ as a symplectic diffeomorphism of $(B_{r},\omega_0)$ 
such that $\psi(0)=0$.   
Further  assume that  
$$
\psi: (B_{r},\omega_0) \to (B_{r},\omega_0)
$$
is given by unitary linear map $\psi=A\in U(n)$.
In this case we
define $\widetilde  \psi:L_{r}\to L_{r}$ by
\begin{eqnarray}
\label{e:deflift}
\widetilde  \psi (z,\ell)=(A(z), A(\ell)).
\end{eqnarray}
Recall  the  classification theorem 
of several complex variables of Cartan \cite{rudin}; a holomorphic map on
$\mathbb{C}^n$ that
maps the ball to it self and fixes the origin must be given by a unitary matrix.  
\begin{lem}
\label{l:symplift}
The map  $\widetilde  \psi$ defined in (\ref{e:deflift})  preserves the symplectic form $\widetilde\omega_\rho$.
\end{lem}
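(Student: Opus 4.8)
The plan is to reduce the whole statement to a single commutation relation between $\widetilde\psi$ and the diffeomorphism $F_\rho$ used to define the local model, together with the elementary fact that a unitary matrix is a symplectomorphism of $(\mathbb{C}^n,\omega_0)$. Since $\widetilde\omega_\rho$ coincides with $\widetilde\omega(\rho)=F_\rho^*(\omega_0)$ on $L_r$, and $\widetilde\psi$ sends $L_r$ into itself, it is enough to prove that $\widetilde\psi^*\bigl(\widetilde\omega(\rho)\bigr)=\widetilde\omega(\rho)$.

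First I would pass to the dense open set $L_r\setminus E$, where the blow-down $\Phi$ restricts to a diffeomorphism onto $B_r\setminus\{0\}$. The defining formula $\widetilde\psi(z,\ell)=(A(z),A(\ell))$ gives $\Phi\circ\widetilde\psi=A\circ\Phi$, so under this identification $\widetilde\psi$ is simply the map $z\mapsto A(z)$ on $B_r\setminus\{0\}$. The point of working on $L_r\setminus E$ is that $F_\rho$ is honestly defined there as a map into $B_r\setminus B_\rho$, so the pullback $F_\rho^*(\omega_0)$ is available directly.

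The key computation is then the following. Because $A$ is unitary we have $|A(z)|=|z|$, hence
\[
F_\rho\bigl(\widetilde\psi(z)\bigr)=\beta_\rho(|A(z)|)\frac{A(z)}{|A(z)|}
=\beta_\rho(|z|)\frac{A(z)}{|z|}
=A\!\left(\beta_\rho(|z|)\frac{z}{|z|}\right)=A\bigl(F_\rho(z)\bigr),
\]
where the third equality uses the linearity of $A$. Thus $F_\rho\circ\widetilde\psi=A\circ F_\rho$ on $L_r\setminus E$. Pulling back $\omega_0$ and invoking $A^*\omega_0=\omega_0$, one gets
\[
\widetilde\psi^*\bigl(\widetilde\omega(\rho)\bigr)=(F_\rho\circ\widetilde\psi)^*\omega_0
=(A\circ F_\rho)^*\omega_0=F_\rho^*(A^*\omega_0)=F_\rho^*\omega_0=\widetilde\omega(\rho),
\]
so the two forms agree on the dense set $L_r\setminus E$.

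Finally, since $\widetilde\psi$ and $\widetilde\omega(\rho)$ are smooth on all of $L_r$ and $L_r\setminus E$ is dense, the identity extends across the exceptional divisor by continuity. I expect the only genuinely delicate point to be this last extension: one must be sure that $\widetilde\psi$ is smooth at $E$ — which it is, being the lift $(z,\ell)\mapsto(A(z),A(\ell))$ of a linear map — so that equality of two smooth two-forms on a dense subset forces equality on $L_r$. All the real content sits in the commutation relation $F_\rho\circ\widetilde\psi=A\circ F_\rho$; the rest is bookkeeping.
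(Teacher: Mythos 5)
Your proof is correct, and its core is the same as the paper's: the commutation relation $F_\rho\circ\widetilde\psi=A\circ F_\rho$ on $L_r\setminus E$ combined with $A^*\omega_0=\omega_0$. In fact you do the paper one better here, since the paper asserts this relation ``from the definitions'' while you verify it explicitly from unitarity ($|A(z)|=|z|$) and linearity of $A$. Where you genuinely diverge is in handling the exceptional divisor. The paper finishes by noting that, since $A\in U(n)$, the lift $\widetilde\psi$ preserves the K\"ahler form $\omega(\rho)=\Phi^*(\omega_0)+\rho^2\,pr^*(\omega_{\textup{FS}})$, which coincides with $\widetilde\omega_\rho$ near $E$; this uses that the projectivized action of a unitary matrix is an isometry of the Fubini--Study form. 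You instead argue by density: $\widetilde\psi^*\widetilde\omega_\rho$ and $\widetilde\omega_\rho$ are smooth $2$-forms on $L_r$ agreeing on the dense open set $L_r\setminus E$, hence agree everywhere. Your route is more elementary --- it needs nothing about Fubini--Study geometry --- and it is valid provided, as you correctly flag, that $\widetilde\psi$ is smooth across $E$ (it is, being the restriction to $\widetilde{\mathbb{C}^n}$ of the smooth map $(z,\ell)\mapsto(A(z),A(\ell))$ on $\mathbb{C}^n\times\mathbb{C}P^{n-1}$) and that $\widetilde\omega_\rho$ is smooth on all of $L_r$, which is built into the construction of the local model. What the paper's K\"ahler argument buys in exchange is slightly more information: it shows directly that $\widetilde\psi$ preserves $\omega(\rho)$ on a whole neighborhood $L_\delta$ of $E$, and in particular that its restriction to the exceptional divisor preserves the induced Fubini--Study structure, whereas your continuity argument yields the equality of forms without exhibiting that structure.
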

\begin{proof}
From the definitions of $F_\rho$ and $\widetilde\psi$
 we have that $F_\rho \circ   \widetilde\psi= \psi\circ F_\rho$
on $L_{r}\setminus E$. Since  $\widetilde\omega_\rho=F^*_\rho(\omega_0)$, then
$$
(\widetilde  \psi)^*(\widetilde\omega_\rho)=
(\widetilde  \psi)^* \circ F^*_\rho(\omega_0)= F^*_\rho\circ  \psi^*(\omega_0) =
\widetilde\omega_\rho
$$
on $L_{r}\setminus E$.
Finally since $A\in U(n)$, then $\widetilde\psi$  preserves the K\"ahler
form $\omega(\rho)$. In particular the symplectic form $\widetilde\omega_\rho$
on $E$.
\end{proof}

We say that a  symplectic diffeomorphism $\psi$ of $(M,\omega)$  is {\em liftable}
to  $(\widetilde M,\widetilde\omega_\rho)$ if
\begin{itemize}
 \item $\psi(\iota B_{r})=\iota B_{r}$, and
 \item $\iota^{-1}\circ \psi\circ \iota: B_{r}\to B_{r}$ is given by a unitary matrix 
\end{itemize}
where $\rho<r.$ 
This is exactly the description of $\mathcal{H}_{\rho}^U$ given in Section \ref{s:intro}
for the case when $\psi$ is Hamiltonian.
Thus if $\psi$ admits a lift, by Lemma  \ref{l:symplift} we have that $\widetilde\psi$ is a symplectic
diffeomorphism of $(\widetilde M,\widetilde\omega_\rho)$.
It is important to note that the above definition depends on the symplectic 
embedding $\iota: (B_{r},\omega_0)\to (M,\omega)$. Thus from now on we  fix a symplectic 
embedding 
and the lifted diffeomorphisms 
will be  with respect to it.

Now we take into account the problem determining that the lift of a Hamiltonian diffeomorphism
is Hamiltonian. Again, we focus on the local picture.
 Thus let $\psi:B_{r}\to B_{r}$ be liftable and Hamiltonian
and assume  that there is a Hamiltonian path $\{\psi_t\}$,  with $\psi_0=1$, $\psi_1=\psi$
and $\psi_t$ liftable for each $t$. 
Let  $H_t:(B_{r},\omega)\to \mathbb{R}$ and $X_t$ be the Hamiltonian function and time-dependent
vector field 
induced by   the  path  $\{\psi_t\}$. Since the path $\{\psi_t\}$ is liftable, it is actually a path
in $U(n)$; hence $X_t$ is tangent to the sphere centered at the origin. As for the Hamiltonian
function we have the following.

\begin{lem}
\label{l:hamfunc}
Let $\psi_t :(B_{r},\omega_0)\to (B_{r},\omega_0)$ as above; that is, a path of unitary matrices 
starting at the identity matrix. Then 
$H_t(z)=H_t(\lambda z)$ for $z\in B_r$ and $\lambda \in S^1$. 
\end{lem}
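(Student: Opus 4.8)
===== PROOF PROPOSAL =====

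\textit{Plan.} The claim is that the normalized Hamiltonian $H_t$ generating a path of unitary matrices is $S^1$-invariant, i.e.\ constant along the circles $\lambda\mapsto \lambda z$. The natural approach is to relate this rotational symmetry to the fact that each $\psi_t$ commutes with the standard $S^1$-action $z\mapsto \lambda z$ on $(B_r,\omega_0)$. The key observation is that scalar multiplication by $\lambda\in S^1$ is itself a unitary transformation, hence symplectic, and it commutes with every element of $U(n)$ since scalars are central in the matrix sense that $A(\lambda z)=\lambda A(z)$ for any linear $A$. So if I denote by $R_\lambda\colon z\mapsto \lambda z$ the rotation, then $R_\lambda\circ \psi_t = \psi_t\circ R_\lambda$ for all $t$ and all $\lambda\in S^1$.

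\textit{Key steps.} First I would recall the general principle relating Hamiltonians under conjugation by a symplectomorphism: if $\{\psi_t\}$ is generated by $H_t$ (via $\iota_{X_t}\omega_0 = dH_t$) and $\phi$ is a symplectomorphism, then the conjugated path $\{\phi\circ\psi_t\circ\phi^{-1}\}$ is generated by the Hamiltonian $H_t\circ\phi^{-1}$. Applying this with $\phi=R_\lambda$ and using the commutation $R_\lambda\circ\psi_t = \psi_t\circ R_\lambda$, the conjugated path equals the original path $\{\psi_t\}$. Since a Hamiltonian path on $(B_r,\omega_0)$ determines its generating Hamiltonian up to a time-dependent additive constant, I conclude
$$
H_t(R_\lambda^{-1}(z)) = H_t(z) + b_t(\lambda)
$$
for some constant $b_t(\lambda)$ depending on $t$ and $\lambda$ but not on $z$. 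Writing $R_\lambda^{-1}(z)=\bar\lambda z$ and relabelling, this says $H_t(\lambda z) = H_t(z) + \tilde b_t(\lambda)$. The final step is to show the additive constant vanishes, which should force the genuine invariance $H_t(z)=H_t(\lambda z)$.

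\textit{Main obstacle.} The delicate point is pinning down the additive constant $\tilde b_t(\lambda)$. I would handle it by using normalization together with a cocycle/continuity argument. Since $H_t$ is the normalized Hamiltonian, its average (against $\omega^n$, or over the ball) is fixed, and because $R_\lambda$ is volume-preserving the average of $H_t\circ R_\lambda$ equals that of $H_t$; comparing averages in the displayed identity forces $\tilde b_t(\lambda)=0$. Alternatively, one notes that $\lambda\mapsto \tilde b_t(\lambda)$ is a continuous homomorphism from $S^1$ into $(\mathbb{R},+)$—it satisfies $\tilde b_t(\lambda\mu)=\tilde b_t(\lambda)+\tilde b_t(\mu)$ by iterating the relation—and the only such homomorphism is trivial since $S^1$ is compact. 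Either route gives $\tilde b_t\equiv 0$, hence $H_t(z)=H_t(\lambda z)$ for all $\lambda\in S^1$, completing the proof. I expect the averaging argument to be the cleanest, so I would present that and leave the homomorphism remark as an aside.
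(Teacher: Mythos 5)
Your proposal is correct, and it rests on the same key observation as the paper's proof: scalar multiplication $\phi_\lambda(z)=\lambda z$ is central among linear maps, hence commutes with the unitary path $\{\psi_t\}$. Where you differ is in packaging. The paper works infinitesimally: centrality gives $X_t\circ\phi_\lambda=(\phi_\lambda)_*X_t$ for the generating vector field, and then a one-line computation using $\phi_\lambda^*\omega_0=\omega_0$ gives $d(H_t\circ\phi_\lambda)=dH_t$; you instead invoke the group-level conjugation principle (the conjugated path $\phi\circ\psi_t\circ\phi^{-1}$ is generated by $H_t\circ\phi^{-1}$), which is the same computation wrapped in a standard lemma. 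Both routes land on the identity $H_t(\lambda z)=H_t(z)+\tilde b_t(\lambda)$ with $\tilde b_t(\lambda)$ independent of $z$. For killing the constant, the paper's move is simpler than either of yours: since $\phi_\lambda(0)=0$, evaluating your own displayed identity at $z=0$ gives $\tilde b_t(\lambda)=0$ immediately. Your averaging argument also works, but note a small inaccuracy in how you justify it: in this local lemma on $(B_r,\omega_0)$ no normalization of $H_t$ is assumed (nor needed); what actually makes the averaging go through is just that $B_r$ is invariant under $R_\lambda$ and $R_\lambda$ preserves $\omega_0^n$, so integrating the identity over $B_r$ forces $\tilde b_t(\lambda)\cdot\mathrm{Vol}(B_r)=0$. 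The continuous-homomorphism remark is likewise correct, but both of your finishing arguments are heavier than necessary given that the origin is a fixed point of $R_\lambda$.
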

\begin{proof}	
Denote by $X_t$ the time-dependent vector field of 
the path $\{\psi_t\}$. For $\lambda\in S^1$, let $\phi_\lambda:B_r\to B_r$ be matrix multiplication
by $\lambda I$. Since $\phi_\lambda$ is in the center of $U(n)$,
\begin{eqnarray*}
X_t\circ \phi_\lambda =\left.\frac{d}{ds}\right|_{s=t} \psi_s\circ \psi_t ^{-1}\circ  \phi_\lambda
=\left.\frac{d}{ds}\right|_{s=t} \phi_\lambda \circ \psi_s\circ \psi_t ^{-1} = (\phi_\lambda)_* X_t.
\end{eqnarray*}
 
Therefore
$$
d (H_t\circ \phi_\lambda)=\omega_0(X_t, (\phi_\lambda)_*( \cdot)) =
(\phi_\lambda) ^*\omega_0 (X_t,\cdot)=d H_t.
$$
Note that  both functions   $H_t$ and $H_t\circ \phi_\lambda$ agree at the origin, thus
 $H_t(z)=H_t(\lambda z)$.
\end{proof}

Since $\psi_t$ is liftable, we have a symplectic path $\{\widetilde\psi_t\}$ on $(L_{r},\widetilde\omega_\rho)$ 
that starts
at the identity and ends at $\widetilde{\psi}_1=\widetilde\psi$. Moreover if $\widetilde X_t$ is the vector field 
induced by $\{\widetilde\psi_t\}$, 
then  we have that $\pi_*(\widetilde X_t)=X_t$
since $\widetilde \psi_t$ is the lift of $\psi_t$.
Now define the function $\widetilde H_t:(L_{r},\widetilde\omega_\rho)\to \mathbb{R}$
as 
\begin{eqnarray}
 \widetilde H_t(z,\ell) :=
\left\{
	\begin{array}{ll}
		H_t\circ F_\rho(z) & \textup{if } (z,\ell)\in L_r\setminus E \\
		H_t\left(  \frac{\rho}{|w|} w\right) & \textup{if $z=0$ and $[w]=\ell$}.
	\end{array}
\right.
\end{eqnarray}
It follows by Lemma $\ref{l:hamfunc}$ that $\widetilde H_t$ is well-defined and smooth.
That is, is independent of the representative of $\ell$ when evaluated at points in the exceptional divisor.
\begin{prop}
\label{p:localliftHam}
The Hamiltonian function $\widetilde H_t:(L_{r},\widetilde\omega_\rho)\to \mathbb{R}$ defined
above induces the path of   Hamiltonian diffeomorphisms $\{\widetilde\psi_t\}$ that is the lift of
the path $\{\psi_t\}$. 
Moreover $\widetilde X_t$
is such that $\pi_*(\widetilde X_t)=X_t.$
\end{prop}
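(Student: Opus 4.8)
The plan is to establish the defining identity $d\widetilde H_t=\widetilde\omega_\rho(\widetilde X_t,\,\cdot\,)$, first on the open dense set $L_r\setminus E$, where the diffeomorphism $F_\rho$ transports the corresponding identity from $(B_r,\omega_0)$, and then to propagate it across the exceptional divisor $E$ by continuity. The \emph{moreover} assertion requires no separate work: differentiating the lifting relation $\pi\circ\widetilde\psi_t=\psi_t\circ\pi$ in $t$ yields $\pi_*(\widetilde X_t)=X_t$ at once.

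Away from the divisor I would use the conjugation relation $F_\rho\circ\widetilde\psi_t=\psi_t\circ F_\rho$ from the proof of Lemma \ref{l:symplift}. Writing $\widetilde\psi_t=F_\rho^{-1}\circ\psi_t\circ F_\rho$ and differentiating in $t$ shows that $\widetilde X_t$ and $X_t$ are $F_\rho$-related on $L_r\setminus E$, that is $(F_\rho)_*\widetilde X_t=X_t$. Pulling back the Hamiltonian identity $dH_t=\omega_0(X_t,\,\cdot\,)$ under the diffeomorphism $F_\rho$ and using naturality of contraction then gives
$$
d\widetilde H_t=d(H_t\circ F_\rho)=F_\rho^*\big(\omega_0(X_t,\,\cdot\,)\big)=(F_\rho^*\omega_0)\big(\widetilde X_t,\,\cdot\,\big)=\widetilde\omega_\rho(\widetilde X_t,\,\cdot\,),
$$
where I have used the first branch of the definition of $\widetilde H_t$ together with $\widetilde\omega_\rho=F_\rho^*\omega_0$ on $L_r\setminus E$. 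Thus $\widetilde H_t$ generates $\{\widetilde\psi_t\}$ away from $E$.

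The main obstacle is the passage across $E$, where $F_\rho$ is undefined and the computation above does not apply. Here I would argue purely by continuity. Since $\{\widetilde\psi_t\}$ is a smooth path of diffeomorphisms of the whole local model $L_r$, the vector field $\widetilde X_t$ is smooth on all of $L_r$; the form $\widetilde\omega_\rho$ is smooth on $L_r$ by construction; and $\widetilde H_t$ is smooth on $L_r$ by the remark preceding the proposition, which is exactly where Lemma \ref{l:hamfunc} enters---the $S^1$-invariance $H_t(z)=H_t(\lambda z)$ is what makes the two branches defining $\widetilde H_t$ fit together smoothly along the divisor. Consequently $d\widetilde H_t$ and $\widetilde\omega_\rho(\widetilde X_t,\,\cdot\,)$ are continuous $1$-forms on $L_r$ that agree on the dense subset $L_r\setminus E$, and therefore agree on all of $L_r$. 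This extends the Hamiltonian identity to the entire local model and completes the proof.
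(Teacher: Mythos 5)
Your proof is correct and takes essentially the same route as the paper: both establish $\widetilde\omega_\rho(\widetilde X_t,\cdot)=d\widetilde H_t$ on $L_r\setminus E$ by transporting the identity $\omega_0(X_t,\cdot)=dH_t$ through the diffeomorphism $F_\rho$, using the conjugation relation $F_\rho\circ\widetilde\psi_t=\psi_t\circ F_\rho$ together with $\widetilde\omega_\rho=F_\rho^*(\omega_0)$, and both dispose of the ``moreover'' claim by differentiating $\pi\circ\widetilde\psi_t=\psi_t\circ\pi$. The only differences are in bookkeeping, and they favor you: the paper derives the vector-field relation in the pointwise form $F_{\rho,*}(\widetilde X_t)=\beta_\rho X_t$ via an explicit computation with the radial map (using that unitarity keeps the fields tangent to spheres), whereas you invoke $F_\rho$-relatedness directly, and you make explicit the continuity argument across $E$, which the paper leaves implicit since its own computation also stops at ``on $L_r\setminus E$''.
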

\begin{proof}
We already showed that  the vector fields $\widetilde X_t$ and $X_t$ are related
by the blow up map. It only remains to show that $
\iota(\widetilde X_t)\widetilde \omega_\rho= d\widetilde H_t$. 
First note that
$$
(F_{\rho})_{*,x}(X)= \beta_\rho(x) X +d\beta_\rho(X)x.
$$
Since $\beta_\rho$ is radial, the kernel of $d\beta_\rho$ agrees with the tangent space
to the sphere centred at the origin. Now $\psi_t$ is defined by a unitary matrix, thus
outside the origin and $E$, the vector fields $X_t$ and $\widetilde X_t$ lie in the tangent space
of the sphere. Thus 
$F_{\rho,*}(\widetilde X_t)= \beta_\rho  X_t$ and 
\begin{eqnarray*}
 \widetilde \omega_\rho (\widetilde X_t, \cdot) 
 &=&  
 F_\rho^*( \omega_0) (\widetilde X_t, \cdot  ) \\
&=&  
 \omega_0 (F_{\rho,*}\widetilde X_t, F_{\rho,*} (\cdot) ) \\
&=&  
 \omega_0 (\beta_\rho\cdot X_t, F_{\rho,*} (\cdot)) \\
&=&  
 \beta_\rho \, \omega_0 ( X_t, \beta_\rho^{-1}\cdot F_{\rho,*} (\cdot)) \\
&=&  
 \beta_\rho (dH_t)\circ \beta_\rho^{-1}\cdot F_{\rho,*} \\
 &=&  
d(H_t\circ F_\rho).
 \end{eqnarray*}
on  $L_{r}\setminus E$.
\end{proof}

Hence if $\{\psi_t\}$ is a Hamiltonian path on $(M,\omega)$ with Hamiltonian
function $H_t$ and each $\psi_t$ is liftable, that is a path in
$\mathcal{H}^U_{\rho,0}$,
then the lift $\{\widetilde\psi_t\}$
is a Hamiltonian path with Hamiltonian function $\widetilde H_t: 
(\widetilde M, \widetilde\omega_\rho)\to \mathbb{R}$ given by
\begin{eqnarray}
\label{e:hamonblow}
 \widetilde H_t(x) :=
\left\{
	\begin{array}{ll}
		H_t\circ \pi(x)  & \textup{if } \pi(x)\notin \iota B_{r} \\
		H_t  \circ \iota  \circ F_\rho \circ  \iota^{-1}\circ \pi(x)  &
 \textup{if } \pi(x)\in \iota B_{r}\setminus\{x_0\} \\
		H_t\left(  \frac{\rho}{|w|} w\right) &  \textup{if  $x=[x_0,\ell]\in E$ and $[w]=\ell$}.
	\end{array}
\right.
\end{eqnarray}
Thus Proposition \ref{p:localliftHam} can be stated in global terms.

\begin{prop}
\label{p:globalliftHam}
Let $\{\psi_t\}$ be a path of Hamiltonian diffeomorphisms in $\mathcal{H}^U_{\rho,0}$ with Hamiltonian
function $H_t$. Then the lifted
path $\{\widetilde\psi_t\}$ is a Hamiltonian path on $(\widetilde M,\widetilde{\omega}_\rho)$
with Hamiltonian function $\widetilde H_t$ given by (\ref{e:hamonblow}).
\end{prop}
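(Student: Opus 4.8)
The plan is to deduce this global statement by patching the local computation of Proposition \ref{p:localliftHam} with the elementary fact that, away from the exceptional divisor, the blow up map $\pi$ is a symplectomorphism. Since each $\psi_t$ lies in $\mathcal{H}^U_{\rho,0}$ it is liftable, so by Lemma \ref{l:symplift} the lifted path $\{\widetilde\psi_t\}$ is a well-defined path of symplectic diffeomorphisms of $(\widetilde M,\widetilde\omega_\rho)$ starting at the identity; write $\widetilde X_t$ for its generating vector field. The task then reduces to showing that $\widetilde X_t$ is the Hamiltonian vector field of the function $\widetilde H_t$ of (\ref{e:hamonblow}), i.e.\ that $\iota(\widetilde X_t)\widetilde\omega_\rho = d\widetilde H_t$ for every $t$.

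First I would check that $\widetilde H_t$ is a well-defined smooth function on all of $\widetilde M$. The three cases in (\ref{e:hamonblow}) correspond respectively to the region $\pi^{-1}(M\setminus \iota B_r)$, the punctured local model $L_r\setminus E$, and the exceptional divisor $E$. Smoothness across $E$ is already secured locally: by Lemma \ref{l:hamfunc} the value $H_t(\tfrac{\rho}{|w|}w)$ is independent of the representative $w$ of $\ell$, so the two bottom cases fit together smoothly exactly as in Proposition \ref{p:localliftHam}. Consistency of the top two cases on their overlap follows because $\beta_\rho(s)=s$ for $s$ near $r$, so $F_\rho$ is the identity near $\partial B_r$; there $H_t\circ\iota\circ F_\rho\circ\iota^{-1}\circ\pi = H_t\circ\pi$, and the two expressions agree.

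Next I would verify the Hamiltonian identity on each of the two regions that cover $\widetilde M$. On $\pi^{-1}(M\setminus\iota B_r)$ we have $\widetilde\omega_\rho = \pi^*\omega$ by Proposition \ref{p:propofblow}, so $\pi$ restricts to a symplectomorphism and $\widetilde\psi_t = \pi^{-1}\circ\psi_t\circ\pi$; pulling back $\iota(X_t)\omega = dH_t$ by $\pi$ gives $\iota(\widetilde X_t)\widetilde\omega_\rho = d(H_t\circ\pi) = d\widetilde H_t$, since $\widetilde H_t = H_t\circ\pi$ on this region. On the local model $L_r$ the restriction of $\widetilde H_t$ coincides, under the identification by $\iota$, with the local Hamiltonian of Proposition \ref{p:localliftHam}, which yields $\iota(\widetilde X_t)\widetilde\omega_\rho = d\widetilde H_t$ on $L_r\setminus E$; since both sides are smooth $1$-forms and $L_r\setminus E$ is dense in $L_r$, the identity extends across $E$ by continuity.

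Finally, as these two regions cover $\widetilde M$ and the pointwise identity $\iota(\widetilde X_t)\widetilde\omega_\rho = d\widetilde H_t$ holds on each, nondegeneracy of $\widetilde\omega_\rho$ forces $\widetilde X_t$ to be the Hamiltonian vector field of $\widetilde H_t$ everywhere, so $\{\widetilde\psi_t\}$ is the Hamiltonian path generated by $\widetilde H_t$. I do not expect a deep obstacle: the substantive dynamical computation was already carried out in Proposition \ref{p:localliftHam}, and the global claim is essentially a patching argument. The only point that genuinely demands care is verifying that $\widetilde H_t$ is globally smooth and that the two coordinate expressions agree on their overlap, which is precisely where the normalization $\beta_\rho(s)=s$ near $s=r$ and the $S^1$-invariance from Lemma \ref{l:hamfunc} do the work.
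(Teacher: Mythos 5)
Your proof is correct and follows essentially the same route as the paper, which states Proposition \ref{p:globalliftHam} as the direct globalization of Proposition \ref{p:localliftHam}: the local computation handles the model $L_r$, while outside $\iota B_r$ the blow up map is a symplectomorphism identifying $\widetilde H_t$ with $H_t\circ\pi$. The only difference is that you spell out the patching details (agreement of the two expressions near $\partial B_r$ via $\beta_\rho(s)=s$, and extension across $E$ by density) that the paper leaves implicit.
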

\begin{rema}
The Hamiltonian diffeomorphism on $(\widetilde M,\widetilde{\omega}_\rho)$
induced by the map $H_t\circ \pi$, is {\em not} the one that lifts the Hamiltonian diffeomorphism of
the base manifold. 
Most importantly to our interest,  if $H_t$ generates a loop  of 
Hamiltonian diffeomorphisms in $(M,\omega)$, then $H_t\circ \pi$ induces a path and 
{\em not}  a loop of Hamiltonian diffeomorphisms on $(\widetilde M,\widetilde{\omega}_\rho)$; the time-one
Hamiltonian diffeomorphism of $H_t\circ \pi$ is not the  identity map. Notice also that $H_t\circ \pi$
is independent of $\rho$, whereas $\widetilde H_t$ depends on $F_\rho$.
\end{rema}

\smallskip
There are two typical examples of symplectic diffeomorphisms that are liftable. The first
class of examples is when the support of $\psi$ is disjoint from $\iota B_{r}$. 
In this case the matrix representation of $\psi$ on $\iota B _{r}$ is the
identity matrix.
Another example is a circle action with $x_0$
a fixed point of the action. 
In this case there is a Darboux chart
about $x_0$ so that the action can be described by a loop of unitary matrices.

\medskip
\begin{exa} 
In this example we see how the definition of the Hamiltonian function
$\widetilde H_t$ given in  (\ref{e:hamonblow}) coincides with the natural Hamiltonian function
on $\widetilde {\mathbb{C}^{n}}$, in the case of a linear circle action on $\mathbb{C}^n$.
To that end,  consider a linear circle action  on $(\mathbb{C}^n,\omega_0)$ with Hamiltonian function
 $H:\mathbb{C}^n\to \mathbb{R}$ given by
$$
H(z_1,\ldots,z_n):=-\pi\sum_{j=1}^n m_j |z_j|^2,
$$
where $m_1,\ldots, m_n\in \mathbb{Z}$ and  $\omega_0(X,\cdot)=dH$.
Since the action is linear, it  induces
a Hamiltonian circle action on $(\mathbb{C}P^{n-1},\omega_{\textup{FS}})$ with Hamiltonian function
$$
H^\prime([z_1:\cdots:z_n]):=-\pi\sum_{j=1}^n m_j  \frac{|z_j|^2}{|z|^2}
$$
and 
$\omega_{FS}(X^\prime,\cdot)=dH^\prime$.

Thus we have a circle action on $\mathbb{C}^n \times \mathbb{C}P^{n-1}$
given by the diagonal action. Furthermore $\widetilde {\mathbb{C}^{n}}$ is invariant under
the  action. Recall the symplectic form 
$\omega(\rho)=
\Phi^*(\omega_0)+\rho^2 pr^*(\omega_{\textup{FS}})$ 
on $\widetilde {\mathbb{C}^{n}}$.   Then the circle
action on $(\widetilde {\mathbb{C}^{n}}, \omega(\rho))$ is Hamiltonian, with Hamiltonian function
$H+\rho^2 H^\prime$ restricted to $\widetilde {\mathbb{C}^{n}}$.

Now we compute the Hamiltonian function $\widetilde H$
on a small neighborhood $U$ of the exceptional divisor, following the definition given by Eq.
 (\ref{e:hamonblow}), 
Recall that the symplectic form $\widetilde \omega_\rho$ on $\widetilde {\mathbb{C}^{n}}$
equals $\omega(\rho)$ on $U$. Then for 
$(z,[z_1:\cdots :z_n])\in U\setminus E$ we have
\begin{eqnarray*}
\widetilde H(z,[z_1:\cdots :z_n])&=& H\circ F_\rho(z) \\ 
&=& H\left( \sqrt{\rho^2+{|z|}^2}  \frac{z}{|z|}  \right) =-\pi\frac{\rho^2+{|z|}^2}{{|z|}^2}\sum_{j=1}^n m_j |z_j|^2\\
&=&- \pi\sum_{j=1}^n m_j {|z_j|^2} -\rho^2 \pi \sum_{j=1}^n m_j \frac{|z_j|^2}{|z|^2}. 
\end{eqnarray*}
Now for 
$(0,[w_1:\cdots w_n])$ in the exceptional divisor
\begin{eqnarray*}
\widetilde H(0,[w_1:\cdots :w_n])&=& H\left(  \frac{\rho}{|w|}w  \right)\\
&=& -\pi\sum_{j=1}^n m_j  \rho^2\frac{|w_j|^2}{|w|^2}.
\end{eqnarray*}
That is $\widetilde H=H+\rho^2 H^\prime$ in a small neighborhood  of the exceptional divisor.
\end{exa}
\medskip

The process of blowing up a point has an alternative description than the one presented
in Section \ref{sec:blow}.
Heuristically, the blow up of a point of weight $\rho$  can be described as removing the 
interior of the embedded ball $ B_{\rho}$  and collapsing its boundary
to $\mathbb{C}P^{n-1}$ via the Hopf fibration. 
The next result is a consequence of this fact.

\begin{lem}
\label{l:normalizedHam}
Let $H:(M,\omega)\to \mathbb{R}$ be a smooth function with compact support and 
$\widetilde H: (\widetilde M,\widetilde \omega_\rho)\to \mathbb{R}$
defined  as in  (\ref{e:hamonblow}). Then,
$$
 \int_{\widetilde M} \widetilde H\, \widetilde\omega^n_\rho=
\int_{M} H\,\omega^n -  \int_{\iota B_\rho} H\,\omega^n.  
$$
\end{lem}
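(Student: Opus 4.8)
The plan is to exploit the fact, recorded in Proposition \ref{p:propofblow}(1), that $\pi:\widetilde M\setminus E\to M\setminus\{x_0\}$ is a diffeomorphism, together with the observation that the exceptional divisor $E$ is a submanifold of positive codimension and hence a null set for the top form $\widetilde\omega_\rho^n$. Consequently $\int_{\widetilde M}\widetilde H\,\widetilde\omega_\rho^n=\int_{\widetilde M\setminus E}\widetilde H\,\widetilde\omega_\rho^n$, so the value assigned to $\widetilde H$ along $E$ by the third line of (\ref{e:hamonblow}) plays no role, and I may compute the integral by partitioning $\widetilde M\setminus E$ according to the two remaining regimes in the definition of $\widetilde H$.

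First I would split $M\setminus\{x_0\}$ as the disjoint union of $M\setminus\iota B_r$ and $\iota B_r\setminus\{x_0\}$, which lifts under $\pi^{-1}$ to a partition of $\widetilde M\setminus E$ into $\pi^{-1}(M\setminus\iota B_r)$ and $L_r\setminus E$. On the first piece Proposition \ref{p:propofblow}(2) gives $\widetilde\omega_\rho=\pi^*\omega$, and the first line of (\ref{e:hamonblow}) gives $\widetilde H=H\circ\pi$; since $\pi$ restricts to an orientation-preserving diffeomorphism there, the change of variables formula yields $\int_{\pi^{-1}(M\setminus\iota B_r)}\widetilde H\,\widetilde\omega_\rho^n=\int_{M\setminus\iota B_r}H\,\omega^n$.

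On the second piece I work in the local model, where $\widetilde\omega_\rho=F_\rho^*(\omega_0)$ by construction and, by the second line of (\ref{e:hamonblow}), $\widetilde H=(H\circ\iota)\circ F_\rho$ once $L_r\setminus E$ is identified with $B_r\setminus B_\rho$ via $\iota^{-1}\circ\pi$. Because $(F_\rho^*\omega_0)^n=F_\rho^*(\omega_0^n)$ and $F_\rho:L_r\setminus E\to B_r\setminus B_\rho$ is a diffeomorphism, the change of variables formula collapses the integrand to $F_\rho^*\big((H\circ\iota)\,\omega_0^n\big)$ and gives $\int_{L_r\setminus E}\widetilde H\,\widetilde\omega_\rho^n=\int_{B_r\setminus B_\rho}(H\circ\iota)\,\omega_0^n$. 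Finally, using that $\iota$ is a symplectic embedding, so $\iota^*\omega=\omega_0$, this last integral equals $\int_{\iota B_r\setminus\iota B_\rho}H\,\omega^n$.

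Adding the two contributions, the exterior term and the annulus term reassemble into $\int_{M\setminus\iota B_\rho}H\,\omega^n=\int_M H\,\omega^n-\int_{\iota B_\rho}H\,\omega^n$, which is the claimed identity. I expect no serious obstacle, since the argument is essentially two applications of the change of variables formula glued along a partition; the only points requiring care are the verification that $E$ and the boundary spheres $\partial(\iota B_r)$, $\partial(\iota B_\rho)$ are null sets (so that they may be discarded and the regions treated as a clean disjoint union) and the bookkeeping that $F_\rho^*(\omega_0)$ — rather than the K\"ahler form $\omega(\rho)$, to which it is equal only on $L_\delta$ — is the correct form to integrate against on all of $L_r\setminus E$. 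This last point is exactly what turns the heuristic ``remove the interior of $\iota B_\rho$ and keep $M\setminus\iota B_\rho$'' into the precise statement.
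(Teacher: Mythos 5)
Your proposal is correct and follows essentially the same route as the paper's proof: split $\widetilde M$ into the region over $M\setminus\iota B_r$ (where $\pi$ is a symplectomorphism and $\widetilde H=H\circ\pi$) and the local model $L_r$, discard the measure-zero exceptional divisor, and apply change of variables via $F_\rho$ using $F_\rho^*(\omega_0)=\widetilde\omega_\rho$ to convert the local-model integral into $\int_{\iota B_r\setminus\iota B_\rho}H\,\omega^n$. The only differences are cosmetic (you remove $E$ at the outset rather than midway, and you spell out the null-set and orientation bookkeeping that the paper leaves implicit).
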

\begin{proof}
By Proposition \ref{p:propofblow} the blow up map induces a symplectic 
diffeomorphism between $(\widetilde M\setminus \pi^{-1}(\iota B_{r}),\widetilde\omega_\rho)$
and $(M\setminus \iota B_{r}, \omega)$. Since $\widetilde H=H\circ \pi$ on $\widetilde M\setminus \pi^{-1}(\iota B_{r})$
we get
$$
\int_{\widetilde M} \widetilde H\, \widetilde\omega^n_\rho =
\int_{ M\setminus \iota B_{r}  } H\,\omega^n+
\int_{\pi^{-1}(\iota B_{r})} \widetilde H\, \widetilde\omega^n_\rho.
$$
By the definition of $\widetilde H$  on $\pi^{-1}(\iota B_{r})$, the fact  that 
$F_\rho^*(\omega_0)=\widetilde\omega_\rho$  on 
$\iota B_{r}\setminus \iota B_\rho$
and removing the exceptional divisor from the domain
of the second integral, the claim follows;
\begin{eqnarray*}
\int_{\widetilde M} \widetilde H\, \widetilde\omega^n_\rho&=&
\int_{ M\setminus \iota B_{r}  } H\,\omega^n+
\int_{\pi^{-1}(\iota B_{r})\setminus  E} H\circ F_\rho \,\widetilde\omega^n_\rho\\
&=&
\int_{ M\setminus \iota B_{r}  } H\,\omega^n+
\int_{\iota B_{r}\setminus \iota B_\rho} H\,\omega^n\\
&=&
\int_{M} H\,\omega^n -\int_{\iota B_\rho} H\,\omega^n.
\end{eqnarray*}
\end{proof}

\begin{rema}
Remember that  we fix a symplectic embedding $\iota: (B_{r},\omega_0)\to (M,\omega)$ 
and respect to this embedding we have lifted symplectic and Hamiltonian diffeomorphisms. 
Clearly a different embedding 
might yield a different set of diffeomorphisms that are liftable. Recall from \cite{mcduffpol-packing}, that
if the embeddings are isotopic via symplectic embeddings  then 
the symplectic blow ups are symplectomorphic. 
Since we are interested in the topology of the
group $\textup{Ham}(\widetilde M,\widetilde \omega_\rho)$,
for our purpose it suffices to fix a symplectic embedding and to require 
the unitary condition on diffeomorphisms in a neighborhood of $\iota B_\rho$
and not on all $\iota B_r.$
\end{rema}

\section{Weinstein's morphism on $(\widetilde{M},\widetilde \omega_\rho)$}
\label{s:weinstein}

Now we consider the case of loops in $ \textup{Ham}
(\widetilde{M},\widetilde \omega_\rho)$ when $(M,\omega)$ is a closed
manifold and $\pi \rho^2 < c_G(M,\omega)$, where $c_G(M,\omega)$
stands for the Gromov's width of $(M,\omega)$. As in Section \ref{sec:symplecdiff},
let $x_0\in M$ be a based point 
and $\iota: (B_{r},\omega_0)\to (M,\omega)$  be a fixed symplectic embedding such
that $\iota(0)=x_0$  and $\pi \rho^2<\pi r^2 < c_G(M,\omega)$.

Recall that the period group $\mathcal{P}(M,\omega)$ of $(M,\omega)$
is defined as the image of the pairing $[\omega]\cdot H_2(M;\mathbb{Z})
\to \mathbb{R}$. Weinstein's morphism \cite{weinstein-coho},
$$
\mathcal{A}:\pi_1 (\textup{Ham}(M)) \to \mathbb{R}/ \mathcal{P}( M,\omega)
$$
is defined via the action functional as
$$
\mathcal{A}(\psi)=-\int_{D} u^*( \omega) +\int_{0}^1 H_t(\psi_t(x_0)) dt.
$$
Here $D$ is the unit closed disk and  $u:D\to M$ is a smooth function such that
$u(\partial D)$ is the loop $\{\psi_t(x_0)\}$ and $H_t$ is the 1-periodic Hamiltonian function
induced by  the Hamiltonian  loop $\{\psi_t\}$ subject to the normalized condition
$$
\int_M H_t \, \omega^n=0
$$
for every $t\in [0,1]$.

Remember that  the  dimension of $(M,\omega)$ is greater than two.
Then for the one-point blow up $(\widetilde M,\widetilde\omega_\rho)$,
we have that
$H_2(\widetilde M;\mathbb{Z})\simeq  H_2(M;\mathbb{Z})+\mathbb{Z}\langle L\rangle$
where $L\subset E$ if the class of a line in the exceptional divisor of
$(\widetilde M,\widetilde\omega_\rho)$.
Note also that any class in 
$ H_2(M;\mathbb{Z})$ can be represented by a cycle  away from the embedded ball $\iota B_{r}$.
Hence  $\langle[\omega], c\rangle=
\langle[\widetilde\omega_\rho], \pi^{-1}(c)\rangle$
for any  $c\in H_2(M;\mathbb{Z})$.
By definition of the symplectic form $\widetilde\omega_\rho$ on the blow up, 
$L\subset(\widetilde M,\widetilde\omega_\rho)$
has symplectic area $\rho^2\pi$ and
$$
\mathcal{P}(\widetilde M,\widetilde \omega_\rho)= 
\mathcal{P}( M,\omega) + \mathbb{Z}\langle \pi\rho^2\rangle\subset \mathbb{R}.
$$


Now we give the proofs of the results mentioned at the Introduction.

\begin{proof}[Proof of Theorem. \ref{t:main}]
Let $\{\psi_t\}$ be a loop in $\textup{Ham}(M,\omega)$ 
that is liftable with respect
to the symplectic embedding $\iota: (B_{r},\omega_0)\to (M,\omega)$. That is $\psi_t
\in \mathcal{H}^U_{\rho,0}$ for every $t$.
Fix $p_0\in M$ outside the embedded ball $\iota B_{r}$, since $\psi$
is liftable the loop  $\gamma:=\{\psi_t(p_0)\}$ in $M$ lies  outside the
embedded ball. Hence 
$\widetilde\gamma:=\{\widetilde\psi_t(\pi^{-1}(p_0))\}$ is a loop
in $\widetilde M$ that covers $\gamma$.

Now let $u:D\to M$ 
be a smooth map such that $u(\partial D)=\gamma$. Since $M$ has dimension
greater than two, by the excision theorem for homotopy groups we can
assume that $u(D)$ is disjoint from $\iota B_{r}$. Hence there is a smooth
map $\widetilde u: D\to \widetilde M$, such that $\widetilde u(\partial D)=\widetilde \gamma$
and $\pi\circ\widetilde u=u$.
Since $\pi^*\omega=\widetilde\omega_\rho$ on $\widetilde M\setminus \pi^{-1}(\iota B_{r})$,
we get
$$
\int_{D} u^*(\omega)= \int_{D} \widetilde u^*(\widetilde\omega_\rho ).
$$

Let $H_t:(M,\omega)\to \mathbb{R}$ be the normalized Hamiltonian function induced
by the loop $\psi$. Then by Lemma \ref{l:normalizedHam} the normalized Hamiltonian
of the lifted loop $\widetilde\psi$ is $\widetilde H_t+c_\rho(M,\omega,H_t)$ where 
$\widetilde H_t$ is given by Eq. (\ref{e:hamonblow}) and
$$
c_\rho(M,\omega,H_t):=- \frac{1}{\textup{Vol}(\widetilde M,\widetilde\omega_\rho^n)} 
  \int_{\widetilde M} \tilde H_t \, \tilde{\omega}_\rho^n
= \frac{1}{\textup{Vol}(\widetilde M,\widetilde\omega_\rho^n)}   
  \int_{\iota B_\rho} H_t \,\omega^n.
$$
Hence,
\begin{eqnarray*}
\mathcal{A}(\widetilde\psi) &=&-\int_D \widetilde u^*(\widetilde\omega_\rho)+ 
\int_0^1 (\widetilde H_t+ c_\rho(M,\omega,H_t))(\widetilde\psi_t(\pi^{-1}(p_0)))dt \\
&=& -\int_D u^*( \omega)+ \int_0^1 H_t(\psi_t(p_0))dt + \int_0^1 c_\rho(M,\omega,H_t) dt\\
&=& \left[\mathcal{A}(\psi) + \frac{1}{\textup{Vol}(\widetilde M,\widetilde\omega_\rho^n)}
\int_0^1    \int_{\iota B_\rho} H_t \, \omega^n \,  dt \right].
\end{eqnarray*}

\end{proof}
In the case when the  normalized Hamiltonian function takes the form
\begin{equation}
\label{e:hamfun2}
H_t(z_1,\ldots,z_n) :=-\pi\sum_{j=1}^n m_j |z_j|^2+c_t
\end{equation}
on $\iota B_r$, we have 
\begin{eqnarray*}
\int_0^1    \int_{\iota B_\rho} H_t \, \omega^n \,  dt   
= -(m_1+\cdots +m_n) \frac{\pi^{n+1} \rho^{2n+2}}{ (n+1)!}  + \textup{Vol}( B_\rho,\omega_0^n)\int_0^1 c_t.
\end{eqnarray*}
Since the volume of $(B_\rho, \omega_0^n)$ is $\pi^n\rho^{2n}$,  then in this case 
$\mathcal{A}(\widetilde\psi)$ takes the form
\begin{equation}
\label{e:atilde2}
\mathcal{A}(\widetilde\psi)  =\left[\mathcal{A}(\psi) -
\frac{m_1+\cdots +m_n}{(n+1)!} \frac{\pi^{n+1}\rho^{2n+2}}{ \textup{Vol}( M,\omega_\rho^n)- \pi^n\rho^{2n}  } 
 +    \frac{C \, \pi^{n}\rho^{2n}}{\textup{Vol}( M,\omega_\rho^n)- \pi^n\rho^{2n}  }
\right]
\end{equation}
where $C=\int_0^1 c_t.$

As mentioned at the Introduction, the proof of Theorem \ref{t:rank} relies on some polynomials 
with rational coefficients. In part, we take care of this by assuming that the symplectic form
 must be rational. However some work needs to be done in order 
to guarantee that the  constant $C$ that
appears in Eq. (\ref{e:atilde2}) is in fact a rational number.

Recall  that an $\iota$-circle loop 
$\psi$ in $\ham(M,\omega)$, is a loop of Hamiltonian diffeomorphisms
based at the identity  map such that its normalized Hamiltonian
function on $\iota B_\rho$ takes the form as in  Eq. (\ref{e:hamfun2}).

\begin{lem}
\label{l:crat}
Let $\psi$ be an $\iota$-circle loop in $\ham(M,\omega)$. Then
$$
\mathcal{A}(\psi)=\left[  C\right].
$$
\end{lem}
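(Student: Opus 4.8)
The plan is to observe that for an $\iota$-circle loop the base point $x_0=\iota(0)$ is automatically a fixed point of the loop, which trivializes both terms in Weinstein's action functional. Recall that the morphism $\mathcal{A}$ is defined in this section using precisely this base point $x_0$, so no change of base point is needed.

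First I would check that $\psi_t(x_0)=x_0$ for all $t$. On $\iota B_\rho$ the normalized Hamiltonian reads $H_t(z)=-\pi\sum_{j=1}^n m_j|z_j|^2+c_t$ in the coordinates induced by $\iota$, so writing $z_j=x_j+iy_j$ we have $dH_t=-2\pi\sum_{j=1}^n m_j(x_j\,dx_j+y_j\,dy_j)$, which vanishes at $z=0$. Hence the Hamiltonian vector field $X_t$ vanishes at $x_0$, so $x_0$ is a fixed point of the flow $\{\psi_t\}$ and the loop $\{\psi_t(x_0)\}$ is the constant loop at $x_0$.

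Second I would feed this into the definition of $\mathcal{A}$. Because the boundary loop is constant, the constant map $u\equiv x_0$ is an admissible capping disk, giving $u^*\omega=0$ and therefore $\int_D u^*\omega=0$. It remains to evaluate $\int_0^1 H_t(\psi_t(x_0))\,dt=\int_0^1 H_t(x_0)\,dt$, and reading off the value of $H_t$ at $z=0$ from Eq. (\ref{e:hamfun2}) gives $H_t(x_0)=c_t$, so the integral equals $\int_0^1 c_t\,dt=C$. Combining the two contributions yields $\mathcal{A}(\psi)=[\,0+C\,]=[C]$ in $\mathbb{R}/\mathcal{P}(M,\omega)$, as claimed.

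There is no real obstacle here; the only point deserving care is to confirm that the $H_t$ in the definition of the $\iota$-circle loop is already the globally normalized Hamiltonian used by Weinstein's morphism, so that no further additive constant sneaks in. The normalization cannot disturb the argument, since it alters $H_t$ only by a $t$-dependent constant, which is harmless for the vanishing of $X_t$ at $x_0$ and is in any case absorbed into $c_t$.
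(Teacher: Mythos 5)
Your proof is correct and follows essentially the same route as the paper's: the quadratic form of the normalized Hamiltonian on $\iota B_\rho$ forces $x_0$ to be a fixed point of the loop, the constant capping disk kills the area term, and evaluating $H_t$ at $x_0$ yields $\int_0^1 c_t\,dt = C$. Your write-up is in fact slightly more careful than the paper's, which asserts the fixed-point property and the vanishing of the area term without spelling out the vector-field and capping-disk details.
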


\begin{proof}
Let $H_t$ be the normalized Hamiltonian function of the loop.
Since the loop $\psi$ is  $\iota$-circle, then $H_t$ is given by Eq.
(\ref{e:hamfun2})
on $\iota B_\rho$. In particular $x_0$ is fixed by each Hamiltonian
in the loop.
Therefore,
$$
\mathcal{A}(\psi) = \left[\int_0^1 H_t(x_0) dt\right] = \left[\int_0^1 c_t dt \right] =\left[  C\right].
$$
\end{proof}

If $ \psi $ is an $\iota$-circle loop in $\textup{Ham}(M,\omega)$,
denote by $K(\psi,x_0)$ the sum of its weights 
$m_1+\cdots +m_n$  at $x_0$.

\begin{proof}[Proof of Theorem \ref{t:rank}]
Since the symplectic form $\omega$ is rational and $(M,\omega)$ is closed, the period group 
$\mathcal{P}(M,\omega)$ is discrete and $V:=\textup{Vol}(M,\omega^n)$ is a rational number.
Moreover 
$\mathcal{P}(M,\omega)=\mathbb{Z}\langle a \rangle$ for some $a\in \mathbb{Q}\setminus\{0\}$.

Let $\rho_0>0$ be such that  $\pi\rho_0^2$ is transcendental and less than the Gromov
width of $(M,\omega)$. Then consider the 
blow up of $(M,\omega)$ at $x_0$ of weight $\rho_0.$
Since $\psi_j$ is a loop in $\mathcal{H}^U_0 $, it follows by Proposition
3.8 that  it can be lifted to a Hamiltonian 
loop $\widetilde \psi_j$ on $(\widetilde M,\widetilde\omega_{\rho_0})$.
By hypothesis, there are  positive integers  $n_j:=n(\psi_j)$
such that $\mathcal{A}([\psi_j])=[a/n_j]$ in 
$\mathbb{R}/\mathbb{Z}\langle a\rangle$ for $1\leq j\leq k$.
Then by Lemma \ref{l:crat}, we have that $\mathcal{A}([\psi_j])=[C_j]=[a/n_j]$.
Hence  by  Eq. (\ref{e:atilde2}) we get
\begin{equation}
\label{e:newexp}
\mathcal{A}([\widetilde{\psi}_j])  =\left[	\frac{a}{n_j}
\left(1 + \frac{ (\pi^{}\rho_0^{2})^{n}}{V- (\pi^{}\rho_0^{2})^{n}  }
 \right) -
\frac{K(\gamma_j,x_0)}{(n+1)!} \frac{(\pi^{}\rho_0^{2})^{n+1}}{ V-
 (\pi^{}\rho_0^{2})^{n}  }  
\right]
\end{equation} 
in $\mathbb{R}/\mathbb{Z}\langle a, \pi \rho_0^2\rangle$.

For $k\in \mathbb{Z}$ non zero
 the expression
$k \mathcal{A}([\widetilde{\psi}_j]) =0$ is equivalent to 
$k \mathcal{A}([\widetilde{\psi}_j]) \in \mathbb{Z}\langle a, \pi \rho_0^2\rangle$;
that after multiplying it  by ${V- (\pi^{}\rho_0^{2})^{n}  }$ 
gives a polynomial  of degree $n+1$ in $\pi \rho_0^2$ with rational coefficients 
equal to zero. Since $\pi \rho_0^2$ is assumed to be a transcendental number, 
$k \mathcal{A}([\widetilde{\psi}_j]) \neq 0$ and  
each
$[\widetilde{\psi}_j]$ has infinite order in 
$\pi_1(\textup{Ham}(\widetilde M,\widetilde \omega_{\rho_0}) )$.
Note that  $\rho_0$ is independent of $j$.

\medskip
The rest of the proof is devoted to show
that 
$[\widetilde{\psi}_1], \ldots, [\widetilde{\psi}_k]$
generate a subgroup of 
$\pi_1(\textup{Ham}(\widetilde M,\widetilde \omega_{\rho_0}) )$ isomorphic to $\mathbb{Z}^k.$
Assume that   $\mathcal{A}([\widetilde\psi_r])$ is in the group
generated by $\{\mathcal{A}([\widetilde\psi_j])| j\neq r\}$.
Then there exist $\alpha_j\in\mathbb{Z}$ such that
\begin{eqnarray}
\label{e:relation}
\mathcal{A}([\widetilde\psi_r]) =  \alpha_1 \mathcal{A}([\widetilde\psi_1])
+\cdots+
\alpha_k\mathcal{A}([\widetilde\psi_k])   \
\end{eqnarray}
in $\mathbb{R}/\mathbb{Z}\langle a, \pi \rho_0^2\rangle$,
 where there is no $r$-term on the right hand side.
Substituting Eq. (\ref{e:newexp}) in each term of Eq. (\ref{e:relation}), we get that  
\begin{multline}
\label{e:long}
a\left( \frac{1}{n_r}-\frac{\alpha_1}{n_1}-\cdots -\frac{\alpha_k}{n_k} \right)
\left(1 +   
{\frac{ (\pi^{}\rho_0^{2})^{n}}{V- (\pi^{}\rho_0^{2})^{n}  } }
 \right) + \\
 \frac{ -K(\gamma_j,x_0)+\alpha_1 K(\gamma_1,x_0) +\cdots  +\alpha_k K(\gamma_k,x_0)  }
 {(n+1)!} \frac{(\pi^{}\rho_0^{2})^{n+1}}{ V-
 (\pi^{}\rho_0^{2})^{n}  }  
\end{multline}
belongs to  $\mathbb{Z}\langle a, \pi \rho_0^2\rangle$. 
In order to handle the above expression, set
\begin{eqnarray*}
\alpha_0:=\left( \frac{1}{n_r}-\frac{\alpha_1}{n_1}-\cdots -\frac{\alpha_k}{n_k} \right) \in\mathbb{Q}
\end{eqnarray*}
and
\begin{eqnarray*}
K_0:= \frac{ -K(\gamma_j,x_0)+\alpha_1 K(\gamma_1,x_0) +\cdots  +\alpha_k K(\gamma_k,x_0)  }
 {(n+1)!}\in \mathbb{Q}.
\end{eqnarray*}
Then expression (\ref{e:long}) takes the form
\begin{eqnarray}
\label{e:long3}
a \alpha_0  \left(1 +   
{\frac{ (\pi^{}\rho_0^{2})^{n}}{V- (\pi^{}\rho_0^{2})^{n}  } }
 \right) +   K_0  \frac{(\pi^{}\rho_0^{2})^{n+1}}{ V-
 (\pi^{}\rho_0^{2})^{n}  } =     a A +  \pi^{}\rho_0^{2} B.
\end{eqnarray}
for some $A,B\in \mathbb{Z}$. After multiplying Eq. (\ref{e:long3})
by ${V- (\pi^{}\rho_0^{2})^{n}  }$, we obtain a polynomial of degree $n+1$ in $\pi\rho_0^2$ with rational 
coefficients that is equal to zero;
$$
(B+K_0)(\pi^{}\rho_0^{2})^{n+1}+ aA (\pi^{}\rho_0^{2})^{n}-B V(\pi^{}\rho_0^{2})+
(a\alpha_0 V+K_0-aAV)=0.
$$

Finally, since $\pi\rho_0^2$ is a transcendental number  and $a$ is the generator
of the period group of $(M,\omega)$, 
$A$ and $B$ must be zero.  Thus $K_0=0$ and $\alpha_0=0$.
Since $\{n_1,\ldots , n_k\}$ 
are pairwise relatively prime,
from   Lemma
\ref{l:num} we get that  $\alpha_0\neq0$. This means that Eq.
(\ref{e:relation})  holds only when the $\alpha_j$'s are zero, that is
$[\widetilde{\psi}_1], \ldots, [\widetilde{\psi}_k]$
generate a subgroup of rank $k$.
\end{proof}

In the last part of the proof of Theorem \ref{t:rank} we used the following fact about  integers.
Only when quoting this lemma, we used the hypothesis  that the  $n_j$'s are relative prime by pairs.

\begin{lem}
\label{l:num}
Let $n_1,\ldots ,n_k$ be integers, such that
\begin{itemize}
\item $n_j\geq 2$
\item $(n_1,n_j)=1$ for all $j>1.$
\end{itemize}
Then for any $\alpha_2,\ldots,\alpha_k \in \mathbb{Z}$, 
$$
\frac{1}{n_1}-\frac{\alpha_2}{n_2}-\dots -
\frac{\alpha_{k-1}}{n_{k-1}}
-\frac{\alpha_k}{n_k} 
$$
is not equal to zero.
\end{lem}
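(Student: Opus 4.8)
The plan is to argue by contradiction and reduce the statement to an elementary divisibility fact. Suppose the displayed expression equals zero, so that $\frac{1}{n_1} = \frac{\alpha_2}{n_2} + \cdots + \frac{\alpha_k}{n_k}$. First I would clear the denominators on the right-hand side by multiplying through by the product $D := n_2 n_3 \cdots n_k$. Because each $n_j$ with $j \geq 2$ divides $D$, the quantity $\sum_{j=2}^{k} \alpha_j (D/n_j)$ is an integer, and the relation rearranges to $D/n_1 = \sum_{j=2}^{k} \alpha_j (D/n_j)$. Consequently $D/n_1$ must itself be an integer, i.e. $n_1 \mid D$.

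The decisive step is then to invoke the coprimality hypothesis. Since $\gcd(n_1, n_j) = 1$ for every $j > 1$ and coprimality is multiplicative, $n_1$ is coprime to the whole product, so $\gcd(n_1, D) = 1$. But $n_1 \mid D$ together with $\gcd(n_1, D) = 1$ forces $n_1 = 1$, which contradicts the standing assumption $n_1 \geq 2$. Hence the expression cannot vanish, and the lemma follows.

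The only point requiring any care is this last implication, namely the passage from \emph{$n_1$ is coprime to each $n_j$} to \emph{$n_1$ is coprime to the entire product $D$}; this is precisely where the hypothesis is used. It is worth noting that the argument needs only that $n_1$ is coprime to each of the remaining $n_j$, and not the full pairwise coprimality invoked in the statement of Theorem \ref{t:rank}. Everything else is routine bookkeeping with denominators and presents no genuine obstacle.
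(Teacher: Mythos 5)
Your proof is correct and is essentially the paper's own argument: both clear denominators to deduce that $n_1$ must divide the product $n_2\cdots n_k$, then contradict this using coprimality of $n_1$ with each $n_j$ (your version merely spells out the final divisibility contradiction that the paper leaves as ``which is not possible''). Your closing observation that only coprimality of $n_1$ with the other $n_j$ is needed, not full pairwise coprimality, is also accurate and consistent with how the lemma is stated and used in the paper.
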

\begin{proof}
Assume that 
$$
\frac{1}{n_1}-\frac{\alpha_2}{n_2}-\dots -
\frac{\alpha_{k-1}}{n_{k-1}}
-\frac{\alpha_k}{n_k} =0,
$$
thus
$$
n_2\cdots n_k = \sum_{j=2}^{k} n_1	\cdots \alpha_j\cdots n_k.
$$
That is $n_2\cdots n_k$ is a multiple of $n_1$, which is not possible.
\end{proof}

\bibliographystyle{acm}
\bibliography{/Users/andres/Dropbox/Documentostex/Ref.bib} 

\begin{thebibliography}{10}

\bibitem{abreu-mcduff-topology}
{\sc Abreu, M., and McDuff, D.}
\newblock Topology of symplectomorphism groups of rational ruled surfaces.
\newblock {\em J. Amer. Math. Soc. 13}, 4 (2000), 971--1009 (electronic).

\bibitem{AnjosLalondePin}
{\sc Anjos, S., Lalonde, F., and Pinsonnault, M.}
\newblock The homotopy type of the space of symplectic balls in rational ruled
  4-manifolds.
\newblock {\em Geom. Topol. 13}, 2 (2009), 1177--1227.

\bibitem{Evans-symplectic-mapping}
{\sc Evans, J.~D.}
\newblock Symplectic mapping class groups of some {S}tein and rational
  surfaces.
\newblock {\em J. Symplectic Geom. 9}, 1 (2011), 45--82.

\bibitem{gromov-psudo}
{\sc Gromov, M.}
\newblock Pseudoholomorphic curves in symplectic manifolds.
\newblock {\em Invent. Math. 82}, 2 (1985), 307--347.

\bibitem{Lalonde-Pin}
{\sc Lalonde, F., and Pinsonnault, M.}
\newblock The topology of the space of symplectic balls in rational
  4-manifolds.
\newblock {\em Duke Math. J. 122}, 2 (2004), 347--397.

\bibitem{mcduff-examplesof}
{\sc McDuff, D.}
\newblock Examples of symplectic structures.
\newblock {\em Invent. Math. 89}, 1 (1987), 13--36.

\bibitem{mcduff-blowup}
{\sc McDuff, D.}
\newblock The symplectomorphism group of a blow up.
\newblock {\em Geom. Dedicata 132\/} (2008), 1--29.

\bibitem{mcduffpol-packing}
{\sc McDuff, D., and Polterovich, L.}
\newblock Symplectic packings and algebraic geometry.
\newblock {\em Invent. Math. 115}, 3 (1994), 405--434.
\newblock With an appendix by Yael Karshon.

\bibitem{ms}
{\sc McDuff, D., and Salamon, D.}
\newblock {\em Introduction to symplectic topology}, second~ed.
\newblock Oxford Mathematical Monographs. The Clarendon Press, Oxford
  University Press, New York, 1998.

\bibitem{msjholo}
{\sc McDuff, D., and Salamon, D.}
\newblock {\em {$J$}-holomorphic curves and symplectic topology}, second~ed.,
  vol.~52 of {\em A. M. S. Colloquium Publications}.
\newblock American Mathematical Society, Providence, RI, 2012.

\bibitem{pinso-symplecto}
{\sc Pinsonnault, M.}
\newblock Symplectomorphism groups and embeddings of balls into rational ruled
  4-manifolds.
\newblock {\em Compos. Math. 144}, 3 (2008), 787--810.

\bibitem{rudin}
{\sc Rudin, W.}
\newblock {\em Function theory in the unit ball of {${\bf C}^{n}$}}, vol.~241
  of {\em Fundamental Principles of Mathematical Science}.
\newblock Springer-Verlag, New York-Berlin, 1980.

\bibitem{weinstein-coho}
{\sc Weinstein, A.}
\newblock Cohomology of symplectomorphism groups and critical values of
  {H}amiltonians.
\newblock {\em Math. Z. 201}, 1 (1989), 75--82.

\end{thebibliography}
\end{document}


\begin{lem}
Let $n_1,\ldots ,n_k$ be integers, such that
\begin{itemize}
\item $n_j\geq 2$
\item $(n_1,n_j)=1$ for all $j>1.$
\end{itemize}
Then for any $\alpha_2,\ldots,\alpha_k \in \mathbb{Z}$, 
$$
\frac{1}{n_1}-\frac{\alpha_2}{n_2}-\dots -
\frac{\alpha_{k-1}}{n_{k-1}}
-(1-\alpha_2-\cdots - \alpha_k)\frac{1}{n_k} 
$$
is not an integer.
\end{lem}

\section{Example:  $(\mathbb{C}P^n, \omega)$  for $n>1$  }

In this section we make the computation to prove Theorem . 
Consider $(\mathbb{C}P^{n},\omega)$ where $\omega$ is the Fubini-Study symplectic form
normalized so that the area of a line is $\pi$. Hence 
$
\mathcal{P}(\mathbb{C}P^{n},\omega)=\mathbb{Z}\pi. 
$
Let 
$$
e^{2\pi i}\cdot[z_0:\cdots:z_n]=[z_0:e^{2\pi i} z_1:\cdots:e^{2\pi i n} z_n].
$$
be a Hamiltonian circle action on $(\mathbb{C}P^{n},\omega)$, with Hamiltonian function 
$$
H([z_0:\cdots:z_n]):=\frac{1 }{ \sum^n_{j=0} |z_j|^2} \sum^n_{j=1} \pi j|z_j|^2 .
$$
Since $H$ is a Morse function by Atiyah-Bott localization formula (see \cite{ms})
we get that
$$
\int_{\mathbb{C}P^{n}} H \frac{\omega^n}{n!}=\frac{(-1)^n}{(n+1)!}\sum_{k=1}^n \frac{(\pi k)^{n+1}}
{(-1)^k k! (n-k)!} =\frac{\pi^{n+1}}{2\cdot (n-1)!}.
$$
Hence the normalized Hamiltonian function is $H-\frac{\pi n}{2}$ and as is known
$$
\mathcal{A}(\psi)=\left[ H[1:0:\cdots:0] - \frac{n}{2}\pi \right] =
\left[- \frac{n}{2}\pi \right] \in \mathbb{R}/\mathbb{Z}\langle \pi\rangle.
$$
That is when $n$ is odd
$\psi$ is an element of order two in $\pi_1(\textup{Ham}(\mathbb{C}P^{n},\omega))$.

\medskip
Now we focus on the blow up of $(\mathbb{C}P^{n},\omega)$ at the point $[1:0:\cdots:0]$. To that
end, fix $0<\rho<1$ and  consider the symplectic embedding
$$
\iota : (B_\rho,\omega_0) \to (\mathbb{C}P^{n},\omega)
$$
given by
$$
\iota(z_1,\ldots,z_{n})=\left[\sqrt{1-|z|^2}:z_1:\ldots:z_n   \right].
$$
For the justification that $\iota$ is a symplectic embedding see the Appendix of \cite{mcduffpol-packing}.
In this case the period group of the blow up is
$
\mathcal{P}(\widetilde{\mathbb{C}P^{n}},\widetilde\omega_\rho)=\mathbb{Z}\langle \pi,
\pi \rho^2 \rangle.
$
Notice that $\psi$ is liftable, since the action is induced by complex linear matrix.
Hence $\widetilde \psi$ is well-defined
and according to Theorem \ref{t:main} we have that
$$
\mathcal{A}(\widetilde \psi)=\left[- \frac{n}{2}\pi +c_\rho (\mathbb{C}P^{n},\omega, H)
\right] \in \mathbb{R}/\mathbb{Z}\langle \pi,
\pi \rho^2 \rangle
$$
where
\begin{eqnarray*}
c_\rho  (\mathbb{C}P^{n},\omega, H)  
&=&\frac{1}{ \pi^{n}(1-\rho^{2n})}
\int_{\iota B_\rho} H  -\frac{\pi n}{2}  \omega^n,
\end{eqnarray*}
since
\begin{eqnarray*}
\textup{Vol} (\widetilde{\mathbb{C}P^{n}},\widetilde\omega_\rho^{n})&=&
\textup{Vol} (\mathbb{C}P^{n},\omega^{n})
-\textup{Vol} (B_\rho,\omega_0^{n}) \\
&=&\pi^n(1-\rho^{2n}).
\end{eqnarray*}
Using spherical coordinates and the symmetry of $H\circ \iota$ with respect to the coordinates
$(z_1,\ldots,z_n)$ we get
\begin{eqnarray*}
\int_{\iota B_\rho} H  -\frac{\pi n}{2}  \omega^n 
&=&\int_{B_\rho} H\circ \iota  -\frac{\pi n}{2}  \omega_0^n\\
&=&\int_{B_\rho} \sum_{j=1}^n j\pi |z_j|^2 \omega_0^n   -\frac{\pi n}{2}\pi^n \rho^{2n} \\
&=&\pi \frac{n(n+1)}{2}\int_{B_\rho}  |z_1|^2 \omega_0^n   -\frac{\pi n}{2}\pi^n \rho^{2n} \\
&=&\pi \frac{n(n+1)}{2} \cdot \frac{\pi^n \rho^{2n+2}}{ (n+1)!}   -\frac{\pi n}{2}\pi^n \rho^{2n}.
\end{eqnarray*}
Therefore
\begin{eqnarray*}
c_\rho  (\mathbb{C}P^{n},\omega, H)  
&=&\frac{1}{ \pi^{n}(1-\rho^{2n})}\left(   \frac{\pi^{n+1} \rho^{2n+2}}{2 \cdot (n-1)!}-
\frac{\pi^{n+1}\rho^{2n} n}{2} \right) \\
&=&\frac{\pi \rho^{2n} }{2 (1-\rho^{2n})}\left(   \frac{ \rho^{2}}{ (n-1)!}-
n \right). \\
\end{eqnarray*}

Thus the value of Weinstein's morphism on the lifted loop $\widetilde\psi$ is
$$
\mathcal{A}(\widetilde \psi)=\left[- \frac{n}{2}\pi 
+\frac{\pi \rho^{2n} }{ 2(1-\rho^{2n})}\left(   \frac{ \rho^{2}}{ (n-1)!}-
 n  \right)
\right] \in \mathbb{R}/\mathbb{Z}\langle \pi,
\pi\rho^{2} \rangle.
$$


For  $0<a<\mu$ set $\rho:=\sqrt[4]{a/\pi^2}$. Then
we have
$$
\frac{1}
{ \mu- \pi^2 \rho^4  }
 \cdot \frac{\pi^{3} \rho^{6}}{ 3!}=\frac{1}{3!}\cdot\frac{a^{3/2}}{\mu-a}
$$
Thus there are many values of $\rho$, for which the induced loops
have infinite order in $\pi_1(\textup{Ham}(\widetilde M,\widetilde \omega_\rho) )$.

\end{document}

\begin{quote}
ESPECULACIONES:
En la intruducci\'on se ve que cualquier lazo en $\textup{Ham}$ es homot\'opico
a uno que fije el punto base $x_0$. Y cualquier lazo Hamiltoniano con un punto
fijo elipt\'ico es liftable. Pero {\em creo} que se pueden hacer liftable sin
la condici\'on de que sean holomorfos cercas de $x_0$, como en la definici\'on
de arriba.

Para ello hay que suponer que $\textup{Ham}$ y $\textup{Symp}_0$ coinciden
y usar la carta de Weinstein, esto es una vecindad de 1 en $\textup{Symp}_0$
es difeomorfa a las 1-formas cerradas. Con esta condici\'on creo que puedo
comprobar que si tenemos un lazo $\psi$ que fija a $x_0$, podemos encontrar un
lazo homot\'opico $\psi'$ tal que
\begin{enumerate}
\item existe $U$ vecindad de $x_0$ tal que $\psi'(U)=U$
\item en $U$, $\psi'$ actua de manera lineal 
\end{enumerate}
De cualquier manera creo que la hypotesis de tener un punto fijo eliptico, no
es tan restrictiva.

Hasta aqu\'i tengo casi terminado el analysis de estos lazos y el inducido 
en el blow up y como ver si son no nulos por medio del morfismo de Weinstein.
Todo esto esta del lado soft.

{\bf Me interesa} seguir con esto es dos direcciones, ahora s\'i en el lado 
hard
\begin{enumerate}
\item Morfismo de Seidel 
\item Lagrangianos
\end{enumerate}

Sobre morfismos de Seidel, vamos a terer una mapeo de espacio fibrados
$P_{\widetilde\psi}\to P_{\psi}$, donde $P_{\psi}$ es la fibracion asociada
al lazo $\psi$. Y tambi\'en un mape que mande secciones holomorfas  de $P_{\widetilde\psi}$
a seciones holomorfas de $P_{\psi}$. Hasta aqu\'i si lo entiendo,
donde tengo problemas es en la relaci\'on que habr\'a entre las clases
de Chern y lo m\'as importante la relacion en entre las correspondinetes
GW de secciones que daran pie a la relacion entre los elementos de Seidel. 
Esto es la realcion analoga a
$$
GW_{A,M}(\pi_*\alpha)=GW_{A,\widetilde M}(\alpha)
$$
que vale en caso de variedades $M$, pero que necesitamos para secciones.

Y sobre lagrangianos bueno supongamos que tenemos lagrangianos
$L_0$ y $L_1$ ajenos a $B_\rho$ que se puede separar, pero $x_0$ esta en el soporte
de este Hamiltoniano. Que pasa en $\widetilde M$. O mejor te propongo en un
ejemplo, tal vez no v\'alido por cuestion de dimension: Tenenos dos
circulos en el plano que se intersectan en dos puntos, y $x_0$  dentro
de la interseccion de los dos. Claramente los dos circulos se puede hacer
ajenos mediante un Hamiltoniano, pero cualquiera de esos Hamiltonianos
tendr\'a a $x_0$ dentro de su soporte. Qu\'e pasa cuando $x_0$ explorta,
se podr'an hacer ajenos en la explosion.

\end{quote}